\newtheorem{definition}{Definition}[section]
\newtheorem{theorem}[definition]{Theorem}
\newtheorem{lemma}[definition]{Lemma}
\newtheorem{corollary}[definition]{Corollary}
\newtheorem{proposition}[definition]{Proposition}
\newtheorem*{Rtheorem}{Restriction Theorem}
\theoremstyle{definition}
\newtheorem{example}[definition]{Example}
\theoremstyle{remark}
\newtheorem*{remark}{Remark}
\DeclareMathOperator{\Gl}{GL} \DeclareMathOperator{\So}{SO}
\DeclareMathOperator{\Sl}{SL} 
\DeclareMathOperator{\Ad}{Ad} 
\DeclareMathOperator{\grad}{grad}
\newcommand{\im}{\mathrm{i}}
\newcommand{\ce}{\mathbb{C}}
\newcommand{\er}{\mathbb{R}}
\newcommand{\quot}[2]{#1/\!\!/#2}
\newcommand{\Mp}{\mathcal M_\liep}
\newcommand{\Mnp}{\mathcal M_{\np}}
\newcommand{\nor}{\mathcal N}
\newcommand{\zet}{\mathcal Z}
\newcommand{\xh}{X^{<H>}}
\newcommand{\xha}{\overline{X^{<H>}}}
\newcommand{\lieg}{\mathfrak{g}}
\newcommand{\lieh}{\mathfrak{h}}
\newcommand{\liep}{\mathfrak{p}}
\newcommand{\lieu}{\mathfrak{u}}
\newcommand{\lien}{\mathfrak{n}}
\newcommand{\lieq}{\mathfrak{q}}
\newcommand{\np}{\lien_\liep}
\newcommand{\ddt}{\left.\frac{\partial}{\partial
t}\right|_0}
\title[A Quotient Restriction Theorem]{A Quotient Restriction Theorem for actions of real reductive groups}
\author{Henrik St\"otzel}
\address{Fakult\"at f\"ur Mathematik\\
  Ruhr Universit\"at Bochum\\
  Universit\"atsstrasse 150\\
  D - 44780 Bochum}
\email{henrik.stoetzel@rub.de}
\thanks{The author is supported by the
  Sonderforschungsbereich SFB/TR12 of the Deutsche
  Forschungsgemeinschaft}
\subjclass{57S20, 53D20, 22E46}
\begin{document}

\begin{abstract}
We prove a version of the Chevalley Restriction Theorem for the
action of a real reductive group $G$ on a topological space $X$
which locally embeds into a holomorphic representation. Assuming
that there exists an appropriate quotient $\quot XG$ for the
$G$-action, we introduce a stratification which is defined with
respect to orbit types of closed orbits. Our main result is a
description of the quotient $\quot XG$ in terms of quotients by
normalizer subgroups associated to the stratification.
\end{abstract}

\maketitle

\section{Introduction}

Let $G$ be a connected complex semisimple Lie group and let us
consider the adjoint representation of $G$ on its Lie algebra
$\lieg$. If $\lieh$ is a Cartan subalgebra of $\lieg$, then the
inclusion $\lieh\hookrightarrow \lieg$ induces a homomorphism of
the algebra $\ce[\lieg]$ of polynomials on $\lieg$ into the
algebra $\ce[\lieh]$ of polynomials on $\lieh$. Let $\ce[\lieg]^G$
denote the set of invariant polynomials on $\lieg$ with respect to
the adjoint action of $G$ on $\lieg$. If $\mathcal W$ is the Weyl
group of $\lieg$ with respect to $\lieh$, we denote by
$\ce[\lieh]^{\mathcal W}$ the set of $\mathcal W$-invariant
polynomials on $\lieh$. The Chevalley restriction theorem states
\medskip

(Chevalley) \textit{The inclusion $\lieh\hookrightarrow\lieg$
induces an isomorphism $\ce[\lieg]^G\to \ce[\lieh]^{\mathcal W}$.}

\medskip

In terms of algebraic quotients this means that we have a
canonical isomorphism $\quot{\lieh}{\mathcal W}\to\quot{\lieg}{G}$
of algebraic varieties.

The Chevalley restriction theorem was generalized by Luna and
Richardson (\cite{LunaRich}) and by Schwarz (\cite{Schwarz80}) to
actions of complex reductive groups on affine varieties and to
actions of compact groups on smooth manifolds, respectively.

Assume that $X$ is an irreducible normal affine variety equipped
with a regular action of a complex reductive group and denote by
$\pi\colon X\to\quot XG$ the algebraic quotient. Then there exists
a Zariski open subset $U$ of $X$ and a subgroup $H$ of $G$ such
that each fiber of the restricted quotient $U\to\quot UG$ contains
a closed orbit of orbit type $G/H$. Let $X^H$ be the set of
$H$-fixed points in $X$ and let $\nor_G(H)$ be the normalizer of
$H$ in $G$.

\medskip

(Luna, Richardson) \textit{Assume that the quotient
$\quot{X^H}{\nor_G(H)}$ is irreducible. Then the inclusion
$X^H\hookrightarrow X$ induces an isomorphism
$\quot{X^H}{\nor_G(H)}\to\quot{X}{G}$ of affine varieties.}

\medskip

Let $G$ be compact and let $X$ be a smooth $G$-manifold such that
the quotient $X/G$ is connected. Then there exists a generic
isotropy group, i.\,e.\ a subgroup $H$ of $G$ such that in a
$G$-invariant dense open subset of $X$ each orbit is of orbit type
$G/H$. Let $\xh:=\{x\in X; G_x=H\}$ and let $\xha$ denote the
closure of $\xh$.

\medskip

(Schwarz) \textit{The inclusion $\overline{X^{<H>}}\hookrightarrow
X$ induces a homeomorphism $\overline{X^{<H>}}/\nor_G(H)\to X/G$.}

\medskip

We give a version of the Chevalley Restriction Theorem for actions
of real reductive groups on topological spaces which are locally
embedded into representations.

More precisely our setup is as follows. Let $U^\ce$ be a complex
reductive group with compact real form $U$. Then the map $U\times
\im\lieu\to U^\ce$, $(u,\xi)\mapsto u\exp(\xi)$, is a
diffeomorphism. We call a closed subgroup $G$ of $U^\ce$ real
reductive, if $G=K\exp(\liep)$ for $K:=G\cap U$ and
$\liep:=\lieg\cap\im\lieu$.

If $X$ is a $G$-representation which is given by restriction of a
finite dimensional holomorphic $U^\ce$-representation, then the
topological Hilbert quotient $\pi\colon X\to\quot XG$ exists. By
definition, this quotient identifies two points in $X$ if and only
if the closures of the $G$-orbits through these points intersect.
For a subset $Y\subset X$ which is $G$-saturated, i.\,e.\ for
which $\pi^{-1}(\pi(Y))=Y$ holds, or which is closed and
$G$-invariant, the quotient $Y\to \quot YG$ exists and is given by
the restriction $\pi|Y\colon Y\to\pi(Y)$. We call a topological
$G$-space $X$ a locally $G$-semistable space, if the topological
Hilbert quotient $\pi\colon X\to\quot XG$ exists, and if for each
$x\in X$ there exists a $G$-saturated open neighborhood $W$ of $x$
and a complex reductive group $U^\ce$ containing $G$ as a closed
compatible subgroup such that $W$ is $G$-equivariantly
homeomorphic to a closed $G$-invariant subset of an open
$G$-saturated subset of a holomorphic $U^\ce$-representation space
$V$.

If $X$ is a locally $G$-semistable space and $G\cdot x$ is a
closed orbit in $X$, then there exists a geometric $G$-slice at
$x$ as follows. Let $G_x$ be the $G$-isotropy group at $x$ and let
$S$ be a $G_x$-invariant locally closed subset of $X$ which
contains $x$. Then $G_x$ acts on $G\times S$ by $h\cdot
(g,s)=(gh^{-1},h\cdot s)$ and the restricted action $G\times S\to
X$, $(g,s)\mapsto g\cdot s$ induces a map $\Psi\colon
G\times^{G_x}S\to GS$ where $G\times^{G_x}S$ denotes the quotient
$(G\times S)/G_x$. We call $S$ a geometric $G$-slice, if $\Psi$ is
a homeomorphism onto an open subset. Here it follows from the
construction that the slice $S$ can be chosen such that $GS$ is
$G$-saturated and $S$ is a locally $G_x$-semistable space.

For a locally $G$-semistable space $X$, each fiber of the quotient
$\pi\colon X\to\quot XG$ contains a unique closed orbit and this
orbit is also the unique orbit of minimal dimension in that fiber.
For a compatible subgroup $H$ of $G$, we define $\xh$ to be the
set of points $x\in X$ such that $G\cdot x$ is closed and $G_x=H$.
We call $I_H(X):=\pi^{-1}(\pi(\xh))$ the $G$-isotropy stratum of
$H$ in $X$. One of our results is that the sets $I_H(X)$ define a
stratification of $X$.

If $G$ is complex reductive and acts regularly on an irreducible
complex space, then there exists a dense stratum. This is also the
case if $G$ is compact and $X$ is a smooth $G$-manifold such that
the quotient $X/G$ is connected. For actions of real reductive
groups, the stratification is more delicate. Even if $X$ is a
$G$-representation space, a dense stratum does not necessarily
exist. Originally we were interested in actions on smooth
manifolds but the fact that strata are not smooth in general
motivated our definition of a locally $G$-semistable space. Here a
stratum in $X$ and even the closure of a stratum are again locally
$G$-semistable spaces and they contain a dense stratum.

In the following, we assume that the locally $G$-semistable space
$X$ contains a dense stratum. Let $x_0\in X$ and let $G\cdot x$ be
the unique closed orbit in the fiber $\pi^{-1}(\pi(x_0))$. Let
$G\times^{G_x}S\to GS$ be a slice at $x$. Since $S$ is a locally
$G_x$-semistable space, we have the notion of $G_x$-isotropy
strata in $S$. We define $n(x_0)$ to be the number of open
$G_x$-isotropy strata in $S$ which contain $x$ in their closure
and we call $n(x_0)$ the splitting number at $x_0$. Note that the
splitting number is one in the simple cases where the existence of
a dense stratum is guaranteed.

Our main result is the following.

\begin{Rtheorem}
Let $X$ be a locally $G$-semistable space containing an open and
dense $G$-isotropy stratum $I_H(X)$. Then the topological Hilbert
quotient $\pi_N\colon\xha\to\quot{\xha}{\nor_G(H)}$ exists and the
inclusion $\xha\hookrightarrow X$ induces a continuous finite
surjective map
\[\Phi\colon\quot{\xha}{\nor_G(H)}\to \quot{X}{G}.\] For $x\in X$,
the number of points in the fiber $\Phi^{-1}(\pi(x))$ is equal to
the splitting number $n(x)$. If $x\in\xha$ with $n(x)>1$, then
$\Phi$ is not open at $\pi_N(x)$.
\end{Rtheorem}

Here by a finite map, we mean a proper map with finite fibers. In
the special case where the splitting number is constantly one, the
map $\Phi$ is a homeomorphism. The splitting number is one for
points in the stratum $I_H(X)$ and $I_H(X)$ is again a locally
$G$-semistable space, so in particular the restriction
$\quot{\xh}{\nor_G(H)}\to\quot{I_H(X)}G$ of $\Phi$ is a
homeomorphism.

Our result is new even for a representation of a semisimple real
group or more generally if $X$ is a $G$-representation space which
is given by restriction of a holomorphic $U^\ce$-representation.
If the representation space $X$ contains a dense stratum, then
$\xha$ is the subspace of $H$-fixed points in $X$. In particular
$\xha$ is smooth. This holds also if $X$ is a smooth locally
$G$-semistable space containing a dense stratum.

If $G$ is complex reductive and if $X$ is a holomorphic
$G$-representation, a dense stratum always exists and the
splitting number is constantly one. Consider the adjoint
representation of the complex reductive group $G$ on its Lie
algebra $X=\lieg$. Here $\xha$ is a Cartan subalgebra $\lieh$ of
$\lieg$. The group $H$ acts trivially on $\lieh$, so the quotient
$\quot{\xha}{\nor_G(H)}$ coincides with the geometric quotient
$\lieh/\mathcal W$ where $\mathcal W=\nor_G(H)/H$ is the Weyl
group. Thus we get the assertion of the Chevalley restriction
theorem. For the adjoint representation of a real reductive group,
a dense stratum does not necessarily exist. But for each open
stratum $I_H(\lieg)$, the Lie algebra $\lieh$ is a real Cartan
subalgebra. So, restricting the quotient $\quot \lieg G$ to
$I_H(\lieg)$ we get a version of the Chevalley restriction theorem
for the adjoint representation of a real reductive group.

If $X$ is an irreducible normal variety equipped with a regular
action of a complex reductive group $G$ then a dense stratum
exists and the splitting number is constantly one. Due to
normality of $X$, the map $\Phi$ is an isomorphism of varieties.
If the quotient $\quot{X^H}{\nor_G(H)}$ is irreducible then it
coincides with $\quot{\xha}{\nor_G(H)}$ and we obtain the theorem
of Luna and Richardson where it is assumed that
$\quot{X^H}{\nor_G(H)}$ is irreducible.

If $G$ is compact and $X$ is smooth, then the condition that $X/G$
is connected guarantees that there exists a dense stratum in $X$.
Moreover the splitting number is constantly one, so we obtain the
result of Schwarz.

The author would like to thank P.~Heinzner and G.~Schwarz for
helpful discussions and remarks on the content of this paper.

\section{Locally semistable spaces}

\subsection{Real reductive groups}

Let $U^\ce$ be a complex reductive Lie group with compact real
form $U$. If $\lieu$ denotes the
Lie algebra of $U$, then $U^\ce=U\exp(\im\lieu)$. Here the map
$U\times\im\lieu\to U^\ce$, $(u,\xi)\mapsto u\exp(\xi)$ is a
diffeomorphism. Note that $U^\ce$ is the universal
complexification of $U$ in the sense of \cite{Hochschild}.
We denote by $\theta$ the Cartan involution with fixed point set $U$.

We say that a Lie subgroup $G$ of $U^\ce$ is \emph{compatible}, if
$G=K\exp(\liep)$ for a subgroup $K$ of $U$ and a subspace $\liep$
of $\im\lieu$. A $\theta$-stable closed subgroup of $U^\ce$ is
compatible if and only if it has only finitely many connected
components (see e.g.~Lemma~1.1.3 in \cite{Mie07}). We call
$G=K\exp(\liep)$ the Cartan decomposition of $G$. Note that $K$ is
compact if and only if $G$ is closed. Moreover, we call a Lie
subgroup $H$ of $G$ \emph{compatible} if it is compatible with the
Cartan decomposition of $U^\ce$ or equivalently, if there exists a
subgroup $L$ of $K$ and a subspace $\lieq$ of $\liep$ such that
$H=L\exp(\lieq)$. Note that the latter condition depends only on
the Cartan decomposition of $G$ and not on the choice of $U^\ce$.
In the rest of this paper, $G$ will denote a closed compatible
subgroup of a fixed complex reductive group $U_G^\ce$ and
$G=K\exp(\liep)$ will denote the associated Cartan decomposition
of $G$.

\subsection{The topological Hilbert quotient}

Let $X$ be a topological $G$-space, i.\,e.\ a topological space
equipped with a continuous action of $G$. We define a relation on
$X$ by setting $x\sim y$ if and only if the closures
$\overline{G\cdot x}$ and $\overline{G\cdot y}$ of the orbits
$G\cdot x$ and $G\cdot y$ intersect. If this relation is an
equivalence relation, we define $\quot XG:=X/\sim$ and call the
quotient $\pi\colon X\to\quot XG$ the \emph{topological Hilbert
quotient}. If every $G$-orbit in $X$ is closed, in particular if
$G$ acts properly on $X$, then the quotient $\quot XG$ is the
usual orbit space $X/G$. This happens automatically if $G$ is
compact.

Assume that the topological Hilbert quotient $X\to\quot XG$
exists. A subset $Y\subset X$ is called \emph{$G$-saturated}, if
$Y=\pi^{-1}(\pi(Y))$ holds, or equivalently, if $x\in Y$ whenever
there exists a $y\in Y$ with $\overline{G\cdot
x}\cap\overline{G\cdot y}\neq\emptyset$. We say that a subset $Y$
of $X$ is \emph{$G$-open} if it is open and $G$-saturated.
Furthermore, we call a subset $Y$ of $X$ a \emph{$G$-locally
closed subset}, if the following equivalent conditions are
satisfied.
\begin{itemize}
\item $Y$ is a $G$-invariant closed subset of a $G$-open subset of
$X$. \item $Y$ is the intersection of a $G$-open and a closed
$G$-invariant subset of $X$. \item $Y$ is $G$-invariant, locally
closed, and an orbit $G\cdot y\subset Y$ is closed in $Y$ if and
only if it is closed in $X$.
\end{itemize}
For a $G$-locally closed subset $Y$ of $X$ the quotient
$Y\to\quot{Y}G$ exists and is obtained by restriction of the
quotient $X\to\quot XG$.

\subsection{Semistable spaces}

Recall that $G$ is a compatible subgroup of a complex reductive
group $(U_G)^\ce$ with Cartan involution $\theta$. Let $\rho\colon
G\to \Gl(V)$ be a representation of $G$ on a finite dimensional
complex vector space $V$ such that
$\rho\circ\theta=\theta'\circ\rho$ for a Cartan involution
$\theta'$ of $\Gl(V)$. Equivalently, we assume that we are given a
complex reductive group $U^\ce$ which contains $G$ as a compatible
subgroup and a holomorphic representation $\rho\colon
U^\ce\to\Gl(V)$. For this, note that $U^\ce:=(U_G)^\ce\times
\Gl(V)$ is complex reductive and that the map $g\mapsto
(g,\rho(g))$ embeds $G$ into $U^\ce$ and respects the Cartan
decomposition. Conversely, given a holomorphic representation of a
complex reductive group $U^\ce$, there exists a Cartan involution
$\theta'$ of $\Gl(V)$ which contains the compact group $\rho(U)$
in its fixed point set.

We may assume that $U$ acts by unitary operators on $V$ by
choosing a $U$-invariant hermitian inner product
$\langle\cdot,\cdot\rangle$ on $V$. Let $f\colon V\to \er$,
$f(v)=\frac 12\|v\|^2:=\frac 12\langle v,v\rangle$. Then $f$ is a
$U$-invariant strictly plurisubharmonic exhaustion function on $V$
and we get an induced $U$-invariant K\"{a}hler form
$\omega:=2\im\partial\overline{\partial}f$. The $U$-action on $V$
admits a moment map, i.\,e.\ a $U$-equivariant map $\mu\colon
V\to\lieu^*$ with $d\mu^\xi=\iota_{\xi_V}\omega$. Here $U$ acts on
$\lieu^*$ by the coadjoint action, $\mu^\xi$ is by definition the
function $\mu^\xi(v)=\mu(v)(\xi)$, the fundamental vector field
$\xi_V$ is given by $\xi_V(v)=\ddt \rho(\exp(t\xi)) v$ and
$\iota_{\xi_V}$ is the contraction of $\omega$ with $\xi_V$.
Explicitly, a moment map is given by
\[\mu^\xi(v):=\ddt f(\exp(\im t\xi)\cdot v)=\im \langle\xi_V(v),v\rangle, \]where we identify $T_vV$ with $V$.

Identifying $\lieu^*$ and $\im\lieu$ with respect to a
$U$-invariant inner product $\langle\cdot,\cdot\rangle$ on
$\lieu^\ce$, the moment map induces by restriction a map
$\mu_\liep\colon V\to\liep$, given by
$\langle\mu_\liep(v),\xi\rangle:=\mu_\liep^\xi(v):=\mu^{-\im\xi}(v)$.
Then $\mu_\liep$ is $K$-equivariant and satisfies the defining
equation $\grad \mu_\liep^\xi=\xi_V$ where the gradient is taken
with respect to the Riemannian metric on $V$ associated to the
K\"{a}hler metric. Therefore we call $\mu_\liep$ a \emph{$G$-gradient map}.

We call a $G$-invariant locally closed subset $X$ of $V$ a
\emph{$G$-semistable space} if each $G$-orbit which is closed in
$X$ is also closed in $V$. In the following we consider the
restriction $\mu_\liep\colon X\to\liep$ of the gradient map to
$X$. Define $\Mp$ to be the zero fiber $\mu_\liep^{-1}(0)\subset
X$.

The following results are established in \cite{RS} and
\cite{HSch}.

\begin{theorem}\label{theorem:HSch}
The topological Hilbert quotient $\pi\colon
X\to\quot{X}{G}$ exists and has the following properties.
\begin{enumerate}\item Each fiber of $\pi$ contains a unique
closed $G$-orbit.\label{thm:HSch,item:1}\item Each non-closed
orbit in a fiber of $\pi$ has strictly larger dimension than the
closed orbit in that fiber and contains the closed orbit in its
closure.\label{thm:HSch,item:2}\item For $x\in\Mp$ the orbit
$G\cdot x$ is the unique closed orbit in the fiber
$\pi^{-1}(\pi(x))$.\label{thm:HSch,item:3}\item The inclusion
$\Mp\hookrightarrow X$ induces a homeomorphism
$\Mp/K\to\quot{X}{G}$. \[ \xymatrix{
\Mp\ar[d] \ar@{^{(}->}[r]    & X\ar[d] \\
\Mp/K\ar@{->}[r]^{\sim} & \quot{X}{G}}\]
\label{thm:HSch,item:4}\item The restriction $\pi|\Mp$ of the
quotient map is proper and open.\label{thm:HSch,item:5}
\end{enumerate}
\end{theorem}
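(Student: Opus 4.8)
The plan is to establish the five properties of the topological Hilbert quotient $\pi\colon X\to\quot XG$ by reducing everything to the gradient map $\mu_\liep$ and its zero fiber $\Mp$. The conceptual heart is the following principle, which I would invoke from \cite{RS} and \cite{HSch}: for a $G$-semistable subset $X$ of the representation $V$, an orbit $G\cdot x$ is closed in $X$ (equivalently in $V$, by the definition of semistability) if and only if $G\cdot x$ meets $\Mp=\mu_\liep^{-1}(0)$, and in that case the intersection $G\cdot x\cap\Mp$ is a single $K$-orbit. This is the real-reductive analogue of the Kempf--Ness theorem, and it is the fact I expect to be the main obstacle to make fully rigorous, since it rests on the convexity properties of the function $t\mapsto f(\exp(\im t\xi)\cdot x)$ along one-parameter subgroups: the gradient map condition $\grad\mu_\liep^\xi=\xi_V$ means that $\mu_\liep^\xi(x)=\ddt f(\exp(\im t\xi)\cdot x)$ is the derivative of a convex function, so $\mu_\liep$ detects the critical points of the norm along the $\exp(\im\liep)$-directions. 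A closed orbit is precisely one on which the norm attains a minimum, and such a minimum forces a zero of $\mu_\liep$.

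Granting this principle, I would prove the items in order. For item~\ref{thm:HSch,item:1} and item~\ref{thm:HSch,item:2}, I would argue that each fiber $\pi^{-1}(\pi(x))$ contains an orbit of minimal dimension; by a standard semicontinuity argument the orbit of minimal dimension in the closure $\overline{G\cdot x}$ is closed, and uniqueness follows because two distinct closed orbits cannot have intersecting closures (they would coincide), so the equivalence relation defining $\pi$ has the closed orbit as a canonical representative in each class. The strict dimension inequality in item~\ref{thm:HSch,item:2} and the inclusion of the closed orbit in the closure of every other orbit in the fiber then come from analyzing the boundary $\overline{G\cdot y}\setminus G\cdot y$, which is a union of orbits of strictly smaller dimension, iterating down to the unique closed one.

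For item~\ref{thm:HSch,item:3}, I would use the Kempf--Ness principle directly: if $x\in\Mp$ then $\mu_\liep(x)=0$ witnesses that $G\cdot x$ is closed, and by item~\ref{thm:HSch,item:1} it is the unique closed orbit in its fiber. Item~\ref{thm:HSch,item:4} is the crucial descent statement. The inclusion $\Mp\hookrightarrow X$ induces a map $\Mp/K\to\quot XG$ on quotients since $K$-equivalent points lie in the same $G$-orbit hence the same fiber; surjectivity follows because every fiber contains a closed orbit which by the principle meets $\Mp$, and injectivity follows because $G\cdot x\cap\Mp$ is a single $K$-orbit, so two points of $\Mp$ in the same $\pi$-fiber are $K$-conjugate. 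Continuity of the inverse, and hence the homeomorphism, I would obtain from item~\ref{thm:HSch,item:5}. Finally, for item~\ref{thm:HSch,item:5} I would prove properness of $\pi|\Mp$ by showing that $\mu_\liep$ is proper on norm-bounded sets together with the exhaustion property of $f$, so that preimages of compact sets in $\quot XG$ are compact; openness then follows from the slice description of $\Mp$ near a closed orbit, or alternatively from the fact that a proper surjection with the local product structure coming from the geometric slice is open. The subtlety throughout, and the step I would treat most carefully, is the transition between ``closed in $X$'' and ``closed in $V$'', which is exactly where the semistability hypothesis on $X$ is used and where one must verify that the gradient-map argument performed in $V$ restricts correctly to $X$.
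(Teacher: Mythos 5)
The first thing to note is that the paper itself contains no proof of this theorem: it is introduced with the sentence that these results ``are established in \cite{RS} and \cite{HSch}'' and is used as imported input for everything that follows. So your proposal can only be measured against the arguments in those references, and in outline your strategy is indeed theirs: reduce everything to the Kempf--Ness-type analysis of the gradient map $\mu_\liep$ and its zero fibre $\Mp$, resting on convexity of the norm along the curves $t\mapsto\exp(t\xi)\cdot x$, $\xi\in\liep$.

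Treated as a proof, however, your proposal has a genuine gap, and it sits exactly where the real work is. The principle you allow yourself to quote --- an orbit is closed if and only if it meets $\Mp$, and a \emph{closed} orbit meets $\Mp$ in a single $K$-orbit --- is an orbit-wise statement, and it is too weak to yield existence of the quotient or item (1). In the paper's definition, existence of $\pi$ \emph{means} that the relation $\overline{G\cdot x}\cap\overline{G\cdot y}\neq\emptyset$ is an equivalence relation, and transitivity is precisely as hard as uniqueness of the closed orbit. Your uniqueness argument (``two distinct closed orbits cannot have intersecting closures'') only excludes two closed orbits that are directly related; it does not exclude two distinct closed orbits sitting inside the closure of one non-closed orbit, and it presupposes the notion of ``fiber of $\pi$'', i.e.\ transitivity, which is not yet available. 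The statement that actually does the work in \cite{RS} and \cite{HSch} is about closures, not orbits: $\overline{G\cdot x}\cap\Mp$ is a single $K$-orbit for every $x$ (equivalently, every orbit closure contains exactly one closed orbit). From this, transitivity and items (1), (3), (4) follow by the soft arguments you describe, but it cannot itself be extracted from the orbit-wise principle by those arguments. Two smaller gaps are of the same nature: the assertion that $\overline{G\cdot y}\setminus G\cdot y$ is a union of orbits of strictly smaller dimension is item (2) itself --- for a group $G=K\exp(\liep)$ that need not be algebraic, acting on a merely $G$-locally closed set $X$, this is a theorem of \cite{HSch}, not a standard semicontinuity fact --- and your properness argument for item (5) quantifies over compact subsets of $\quot XG$ before the topology of $\quot XG$ (even its Hausdorffness) is understood; the clean route is the reverse one, namely to establish (4) first, after which properness and openness of $\pi|\Mp$ follow because $K$ is compact.
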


In particular, the theorem states that the orbits which intersect
$\Mp$ are exactly the closed orbits and that $\Mp$ intersects each
closed orbit in a $K$-orbit. So the quotient $\Mp/K$ parameterizes
the closed $G$-orbits in $X$.

\begin{remark}
Note that in particular the quotient $V\to\quot VG$ exists. Then
$X\subset V$ is a $G$-semistable space if and only if it is
$G$-locally closed in $V$.
\end{remark}

For later use we record

\begin{corollary}\label{corollary:MpSchnitt}
Let $Y\subset X$ be $G$-locally closed. Then
$\overline{Y\cap\Mp}=\overline Y\cap\Mp$.
\end{corollary}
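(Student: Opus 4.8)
The plan is to prove the two inclusions separately. The inclusion $\overline{Y\cap\Mp}\subseteq\overline Y\cap\Mp$ is the formal one: since $\mu_\liep$ is continuous, $\Mp=\mu_\liep^{-1}(0)$ is closed in $X$, so $Y\cap\Mp\subseteq\overline Y\cap\Mp$, and as the right-hand side is closed, passing to closures yields the claim. All the content lies in the reverse inclusion $\overline Y\cap\Mp\subseteq\overline{Y\cap\Mp}$.

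For the reverse inclusion I would fix $x\in\overline Y\cap\Mp$ and choose a sequence $y_n\in Y$ with $y_n\to x$, which is possible since $X\subset V$ is metrizable. The idea is to replace each $y_n$ by a point of $Y\cap\Mp$ lying in the same $\pi$-fiber and then extract from these a subsequence converging to $x$. First I would note that, because $Y$ is $G$-locally closed, the closed orbit in the fiber $\pi^{-1}(\pi(y_n))$ is itself contained in $Y$: writing $Y=O\cap C$ with $O$ a $G$-open and $C$ a closed $G$-invariant subset, the whole fiber lies in $O$ by saturatedness, while the closed orbit, being contained in $\overline{G\cdot y_n}\subseteq C$, also lies in $C$. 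By Theorem~\ref{theorem:HSch} every closed orbit meets $\Mp$ in a $K$-orbit, so I may pick $z_n$ in this closed orbit with $z_n\in Y\cap\Mp$; by construction $\pi(z_n)=\pi(y_n)$.

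It remains to show that a subsequence of the $z_n$ converges to $x$ up to the action of $K$. By continuity of $\pi$ we have $\pi(z_n)=\pi(y_n)\to\pi(x)$, so the set $\{\pi(x)\}\cup\{\pi(z_n):n\in\mathbb N\}$ is compact. Since the restriction $\pi|\Mp$ is proper (item~(\ref{thm:HSch,item:5}) of Theorem~\ref{theorem:HSch}), its preimage under $\pi|\Mp$ is a compact subset of $\Mp$ containing every $z_n$ and $x$; hence a subsequence $z_{n_j}$ converges to some $z_\infty\in\Mp$. Using the homeomorphism $\Mp/K\to\quot XG$ of item~(\ref{thm:HSch,item:4}), the equality $\pi(z_\infty)=\pi(x)$ forces $z_\infty$ and $x$ into the same $K$-orbit, say $z_\infty=k\cdot x$. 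Since $Y$ and $\Mp$ are $K$-invariant, the points $k^{-1}\cdot z_{n_j}$ lie in $Y\cap\Mp$ and converge to $x$, giving $x\in\overline{Y\cap\Mp}$.

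The main obstacle is precisely this last step: knowing that the images $\pi(z_n)$ converge controls only the $K$-orbits of the $z_n$ in $\Mp/K$, not the points themselves. Properness of $\pi|\Mp$, together with compactness of $K$, is exactly what upgrades convergence in the quotient to convergence of a subsequence of suitable $K$-translates, and this is where the full strength of Theorem~\ref{theorem:HSch} enters.
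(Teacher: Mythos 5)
Your proof is correct and follows essentially the same route as the paper: replace the approximating points of $Y$ by points of $Y\cap\Mp$ in the same $\pi$-fibers (justified, as in the paper, by $G$-local closedness of $Y$), use properness of $\pi|\Mp$ from Theorem~\ref{theorem:HSch} to extract a convergent subsequence in $\Mp$, and finish with the fact that fibers of $\pi$ meet $\Mp$ in single $K$-orbits. The only cosmetic difference is at the end, where you translate the subsequence by $k^{-1}$ while the paper instead notes that $\overline{Y\cap\Mp}$ is $K$-invariant; these are equivalent.
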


\begin{proof}
Let $x\in\overline Y\cap\Mp$ and let $x_n\in Y$ be a sequence
converging to $x$. Then there exist $y_n\in\overline{G\cdot
x_n}\cap\Mp$. Note that $y_n\in Y$ since $Y$ is $G$-locally
closed. The sequence $\pi(y_n)=\pi(x_n)$ converges to $\pi(x)$.
Since the restriction of $\pi$ to $\Mp$ is proper, we may assume
that $y_n$ converges to a $y\in\Mp$. Then $y\in\overline Y$ and
$\pi(y)=\pi(x)$. Since $\pi^{-1}(\pi(x))\cap\Mp$ is a $K$-orbit we
conclude $x\in K\cdot y\subset \overline{Y\cap\Mp}$.
\end{proof}

\begin{corollary}\label{corollary:SaturatedNeighborhood}
Let $G\cdot x\subset X$ be a closed orbit. Then every
$G$-invariant open neighborhood of $G\cdot x$ contains a $G$-open
neighborhood.
\end{corollary}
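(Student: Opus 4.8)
The plan is to work entirely on the quotient side. Since $\pi$ is a quotient map, a subset of $X$ is $G$-open exactly when it has the form $\pi^{-1}(O)$ for an open $O\subseteq\quot XG$, and such a set contains $G\cdot x$ if and only if $\pi(x)\in O$. Hence, writing $A:=X\setminus W$ for the closed $G$-invariant complement of the given neighborhood $W$, it suffices to produce an open $O\ni\pi(x)$ with $\pi^{-1}(O)\subseteq W$, and the natural candidate is $O:=\quot XG\setminus\pi(A)$. For this I must check two things: that $\pi(x)\notin\pi(A)$, and that $\pi(A)$ is closed. Granting both, $O$ is an open neighborhood of $\pi(x)$, and from $A\subseteq\pi^{-1}(\pi(A))$ one gets $\pi^{-1}(O)\subseteq X\setminus A=W$, so $\pi^{-1}(O)$ is the desired $G$-open neighborhood of $G\cdot x$.

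First I would verify $\pi(x)\notin\pi(A)$. If $\pi(y)=\pi(x)$ for some $y\in A$, then $G\cdot x$ and $G\cdot y$ lie in the same fiber; since $G\cdot x$ is closed it is the unique closed orbit of that fiber, so by parts \ref{thm:HSch,item:1} and \ref{thm:HSch,item:2} of Theorem~\ref{theorem:HSch} we have $G\cdot x\subseteq\overline{G\cdot y}$. As $A$ is closed and $G$-invariant this forces $G\cdot x\subseteq A$, contradicting $G\cdot x\subseteq W$.

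The main work is to show $\pi(A)$ is closed, and here I would pass to the zero fiber $\Mp$ of the gradient map. The key reduction is the identity $\pi(A)=\pi(A\cap\Mp)$: for any $y\in A$ the closed orbit in the fiber $\pi^{-1}(\pi(y))$ meets $\Mp$ in a point $z$, and since $z\in\overline{G\cdot y}\subseteq A$ we get $z\in A\cap\Mp$ with $\pi(z)=\pi(y)$. Now $A\cap\Mp$ is closed in $\Mp$ (as $\Mp=\mu_\liep^{-1}(0)$ is closed in $X$) and $K$-invariant, and because $G$ is closed the group $K$ is compact, so the orbit map $\Mp\to\Mp/K$ is closed; hence the image of $A\cap\Mp$ in $\Mp/K$ is closed. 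Transporting this through the homeomorphism $\Mp/K\xrightarrow{\sim}\quot XG$ of part \ref{thm:HSch,item:4}, under which the composite $\Mp\to\Mp/K\to\quot XG$ is $\pi|\Mp$, shows that $\pi(A\cap\Mp)=\pi(A)$ is closed.

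I expect the only genuinely delicate point to be the identity $\pi(A)=\pi(A\cap\Mp)$: it rests on combining the fact that each fiber meets $\Mp$ in its closed orbit with the fact that this closed orbit lies in the closure of every orbit of the fiber, so that closedness and $G$-invariance of $A$ propagate membership down to $\Mp$. Everything else — the correspondence between $G$-open sets and open sets in the quotient, and the closedness of the $K$-orbit map — is routine once $K$ is known to be compact.
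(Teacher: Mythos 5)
Your proof is correct, but it reaches the conclusion by a route dual to the paper's. The paper's argument is a one-liner: the set $\pi^{-1}(\pi(W\cap\Mp))$ is $G$-open because $W\cap\Mp$ is open in $\Mp$ and the restriction $\pi|\Mp$ is an \emph{open} map (Theorem~\ref{theorem:HSch}, item~\ref{thm:HSch,item:5}); it contains $G\cdot x$ and lies in $W$, the last containment resting on exactly the orbit-closure argument you use to show $\pi(x)\notin\pi(A)$, namely that every orbit in the fiber of a point $w\in W\cap\Mp$ contains the closed orbit $G\cdot w$ in its closure and hence meets, and so lies in, the invariant open set $W$. You never invoke item~\ref{thm:HSch,item:5}: instead you pass to the complement $A=X\setminus W$, reduce $\pi(A)$ to $\pi(A\cap\Mp)$ via items~\ref{thm:HSch,item:1}--\ref{thm:HSch,item:3}, and obtain closedness of $\pi(A\cap\Mp)$ from closedness of the orbit map $\Mp\to\Mp/K$ (compactness of $K$, which holds since $G$ is closed) combined with the homeomorphism of item~\ref{thm:HSch,item:4}. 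In fact the two constructions yield the \emph{same} set: since $\Mp$ meets each fiber in a single $K$-orbit and $W$ is invariant, $\pi(W\cap\Mp)$ and $\pi(A\cap\Mp)$ are disjoint with union $\quot XG$, so your $O$ equals $\pi(W\cap\Mp)$. What your route buys is independence from the openness statement in item~\ref{thm:HSch,item:5}, using only items \ref{thm:HSch,item:1}--\ref{thm:HSch,item:4} plus compactness of $K$; what the paper's buys is brevity, although it leaves the containment $\pi^{-1}(\pi(W\cap\Mp))\subset W$ implicit, which is precisely the verification you carry out in full.
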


\begin{proof}
Let $W$ be a $G$-invariant open neighborhood of $G\cdot x$. The set
$\pi^{-1}(\pi(W\cap\Mp))$ is $G$-open, contains $G\cdot x$ and is
contained in $W$.
\end{proof}

If $H$ is a closed compatible subgroup of $G$ then it follows from the
definition that a $G$-semistable space is also an $H$-semistable
space. Compatible subgroups of $G$ occur naturally as isotropy
groups of closed orbits.

\begin{lemma}\label{lemma:CompIsotropy}(Lemma~5.5 in \cite{HSch})
Let $X$ be a $G$-semistable space and let $x\in\Mp$. Then the
isotropy group $G_x$ is a closed compatible subgroup of $G$ with
Cartan decomposition $G_x=K_x\exp(\liep_x)$ where
$\liep_x:=\{\xi\in\liep;\,\xi_X(x)=0\}$.
\end{lemma}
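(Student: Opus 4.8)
The plan is to use the gradient-map identity $\grad\mu_\liep^\xi=\xi_V$ together with the $K$-invariance of the norm function $f$ to separate the two factors in the Cartan decomposition $g=k\exp(\xi)$ of an element of $G_x$.

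First I would record two elementary facts. The isotropy group $G_x$ is closed, being the preimage of $\{x\}$ under the continuous orbit map $g\mapsto g\cdot x$; and $\liep_x=\{\xi\in\liep:\xi_X(x)=0\}$ is a linear subspace, since $\xi\mapsto\xi_V(x)=d\rho(\xi)x$ is linear and $\xi_X(x)=\xi_V(x)$ because $X$ is $G$-invariant. Setting $K_x:=K\cap G_x$, a subgroup of the compact group $K$, the inclusion $K_x\exp(\liep_x)\subseteq G_x$ is immediate: for $\xi\in\liep_x$ we have $d\rho(\xi)x=0$, so $\exp(\xi)$ acts as $\exp(d\rho(\xi))$ and fixes $x$, while $k\cdot x=x$ for $k\in K_x$ by definition.

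The core is the reverse inclusion, and here the key step is to introduce, for fixed $\xi\in\liep$, the function $\phi_\xi(t):=f(\exp(t\xi)\cdot x)$. Unwinding the definition of $\mu_\liep$ gives $\phi_\xi'(t)=\mu_\liep^\xi(\exp(t\xi)\cdot x)$, and differentiating again along the curve $c(t)=\exp(t\xi)\cdot x$, whose velocity is $c'(t)=\xi_V(c(t))$, and invoking $\grad\mu_\liep^\xi=\xi_V$, I would obtain $\phi_\xi''(t)=\|\xi_V(c(t))\|^2\geq 0$. Hence $\phi_\xi$ is convex; and since $x\in\Mp$ we have $\phi_\xi'(0)=\mu_\liep^\xi(x)=\langle\mu_\liep(x),\xi\rangle=0$, so $t=0$ is a global minimum of $\phi_\xi$.

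Given this, I would take an arbitrary $g\in G_x$ and write it, by the Cartan decomposition of $G$, uniquely as $g=k\exp(\xi)$ with $k\in K$ and $\xi\in\liep$. From $g\cdot x=x$ we get $\exp(\xi)\cdot x=k^{-1}\cdot x$, so applying the $K$-invariant function $f$ yields $\phi_\xi(1)=f(k^{-1}\cdot x)=f(x)=\phi_\xi(0)$. Since a convex function whose global minimum is attained at both endpoints of $[0,1]$ is constant there, $\phi_\xi''\equiv 0$ on $[0,1]$; in particular $\xi_V(x)=0$, i.e.\ $\xi\in\liep_x$, whence $\exp(\xi)$ fixes $x$ and $k=g\exp(-\xi)\in K_x$. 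This gives $G_x\subseteq K_x\exp(\liep_x)$, hence the equality $G_x=K_x\exp(\liep_x)$; with $K_x\subseteq K$ a subgroup and $\liep_x\subseteq\liep$ a subspace, this is precisely the assertion that $G_x$ is a closed compatible subgroup with the stated Cartan decomposition. I expect the identification of $\phi_\xi''$ with $\|\xi_V\|^2$ — the single place where the defining property of the gradient map enters — to be the main obstacle; once convexity and $\phi_\xi'(0)=0$ are established, the separation of factors and the set-equality are routine.
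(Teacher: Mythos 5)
Your proof is correct. Note that the paper itself contains no argument for this lemma --- it is quoted with a citation to Lemma~5.5 of \cite{HSch} --- so there is no in-paper proof to compare against; your argument (convexity of $t\mapsto f(\exp(t\xi)\cdot x)$ via $\grad\mu_\liep^\xi=\xi_V$, the critical point at $t=0$ forced by $x\in\Mp$, and then splitting $g=k\exp(\xi)$ using $K$-invariance of $f$) is precisely the standard Kempf--Ness-type argument underlying the cited result, so it counts as essentially the same approach rather than a genuinely different route.
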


\subsection{The Slice Theorem}

Let $X\subset V$ be a $G$-semistable space. From \cite{HSch} we
recall the construction of a slice at a point $x\in\Mp$ in the
case $X=V$. First, the action of $G$ on $X$ induces an isotropy
representation of $G_x$ on the tangent space $T_xX$. Then $T_xX$
can be $G_x$-equivariantly identified with $X$. Note that $T_xX$
is a $G_x$-semistable space since $G_x$ is a compatible subgroup
of $G$. The tangent space $T_x(G\cdot x)$ of the orbit $G\cdot x$
is a $G_x$-invariant subspace of $T_xX$.

\begin{lemma}[Corollary~14.9 in \cite{HSch}]\label{lemma:vollstReduzibel}
The $G$-representation $V$ is completely reducible.
\end{lemma}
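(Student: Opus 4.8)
The plan is to produce, for every $G$-invariant complex subspace $W\subset V$, a $G$-invariant complement. The natural candidate is the orthogonal complement $W^\perp$ with respect to the $U$-invariant hermitian inner product $\langle\cdot,\cdot\rangle$ fixed above; the whole difficulty is to show that this complement is invariant not merely under the compact group $K$ but under all of $G$.

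First I would record that $G$ is $\theta$-stable: since $G=K\exp(\liep)$ with $K\subset U$ and $\liep\subset\im\lieu$, and since $\theta$ restricts to the identity on $\lieu$ and to $-\id$ on $\im\lieu$, we obtain $\theta(k\exp(\xi))=k\exp(-\xi)\in G$. Next I would exploit the compatibility $\rho\circ\theta=\theta'\circ\rho$. Because the inner product is $U$-invariant, $\rho(U)$ lies in the unitary group, and the Cartan involution $\theta'$ of $\Gl(V)$ with this fixed point set is $A\mapsto (A^*)^{-1}$, where $A^*$ denotes the hermitian adjoint. Hence for every $g\in G$
\[\rho(g)^*=\rho(\theta(g))^{-1}=\rho(\theta(g)^{-1})=\rho(\theta(g^{-1})).\]

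With this identity in hand the invariance of $W^\perp$ is immediate: for $g\in G$, $v\in W^\perp$ and $w\in W$,
\[\langle\rho(g)v,w\rangle=\langle v,\rho(g)^*w\rangle=\langle v,\rho(\theta(g^{-1}))w\rangle=0,\]
since $\theta(g^{-1})\in G$ by $\theta$-stability and therefore $\rho(\theta(g^{-1}))w\in W$. Thus $\rho(g)v\in W^\perp$, so that $W^\perp$ is $G$-invariant and $V=W\oplus W^\perp$ is a $G$-invariant decomposition. Iterating on $W$ and $W^\perp$ then yields a decomposition of $V$ into irreducible $G$-subrepresentations.

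The step I expect to be the crux is the passage from the adjoint operation to the Cartan involution, namely the identity $\rho(g)^*=\rho(\theta(g^{-1}))$: this is exactly where the hypotheses enter, and it fails for a general closed subgroup. What makes it work here is, on the one hand, the compatibility $\rho\circ\theta=\theta'\circ\rho$ linking the abstract involution $\theta$ to the geometric adjoint, and on the other hand the $\theta$-stability of $G$, which guarantees that $\theta(g^{-1})$ again belongs to $G$ so that the $G$-invariance of $W$ may be invoked. Note that connectedness of $G$ is never used, so the argument covers compatible subgroups with finitely many connected components.
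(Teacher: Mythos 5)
Your proof is correct in substance, but there is nothing in the paper to compare it against: the paper gives no argument for this lemma, quoting it as Corollary~14.9 of \cite{HSch}. What you supply is the classical self-adjointness argument: compatibility of $G$ gives $\theta$-stability, the intertwining relation $\rho\circ\theta=\theta'\circ\rho$ converts $\theta$-stability into closure of $\rho(G)$ under hermitian adjoints, and a self-adjoint set of operators leaves the orthogonal complement of any invariant subspace invariant. This is indeed the right mechanism (and, as you note, it nowhere uses connectedness of $G$, which matters since compatible subgroups may have several components); it makes this point of the paper self-contained, whereas the citation buys brevity and defers to the framework of \cite{HSch} that the paper reuses anyway.

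One step should be tightened, namely exactly the step you identify as the crux. You pass to the formula $\theta'(A)=(A^*)^{-1}$ by asserting that the hypothesized $\theta'$ has the unitary group of your chosen inner product as its fixed point set. The hypothesis does not give this: it only gives $\rho(K)\subset\fix(\theta')$ (respectively $\rho(U)\subset\fix(\theta')$ in the holomorphic formulation), and $\fix(\theta')$ is the unitary group of \emph{some} hermitian inner product, not necessarily the $U$-invariant one you fixed. Two clean repairs are available. Either choose the inner product adapted to $\theta'$, i.e.\ the one whose unitary group equals $\fix(\theta')$; it is automatically $\rho(U)$-invariant, and an involutive automorphism of $\Gl(V)$ whose fixed point set is exactly a unitary group must be $A\mapsto(A^*)^{-1}$ for the corresponding inner product. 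Or, simpler, work in the equivalent formulation where $\rho$ extends holomorphically to $U^\ce$ and verify the key identity directly: $U$-invariance of $\langle\cdot,\cdot\rangle$ makes $d\rho(\eta)$ skew-hermitian for $\eta\in\lieu$, hence $d\rho(\xi)$ hermitian for $\xi\in\im\lieu$ by complex linearity of $d\rho$, so that for $g=u\exp(\xi)$ with $u\in U$, $\xi\in\im\lieu$ one computes $\rho(g)^*=\exp(d\rho(\xi))\rho(u)^{-1}=\rho(\theta(g^{-1}))$. With that identity established, your orthogonal-complement argument and the induction on dimension go through verbatim.
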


Since $G_x$ is compatible, $V\cong T_xX$ is completely reducible
as a $G_x$-representation. Therefore there exists a
$G_x$-invariant subspace $W$ of $T_xX$ such that $T_xX=T_x(G\cdot
x)\oplus W$. The isotropy $G_x$ acts on the product $G\times W$ by
$h\cdot (g,s)=(gh^{-1},hs)$. We denote the quotient of this action
by $G\times^{G_x}W$ and we denote the element $G_x\cdot (g,s)$ by
$[g,s]$. The action of $G$ on $X$ induces a map $G\times^{G_x}W\to
GW\subset X$. There exists a $G_x$-invariant neighborhood $S$ of
$0$ in $W$ such that the restriction $G\times^{G_x}S\to GS$ is a
diffeomorphism onto an open subset $GS$ of $X$. Moreover, by
Corollary~\ref{corollary:SaturatedNeighborhood}, $S$ can be chosen
such that $GS$ is $G$-saturated in $X$ and $S$ is $G_x$-open in
$T_xX$. In particular, $S$ is a $G_x$-semistable space.

Similarly, for an arbitrary $G$-semistable space $X$ we get
(\cite{S08}, Corollary~4.5, Lemma~4.9)

\begin{theorem}[Slice Theorem] \label{slicetheorem}
Let $X$ be a $G$-semistable space and let $x\in \Mp$. Then there
exists a locally closed $G_x$-stable subset $S$ of $X$ containing
$x$, such that $GS$ is $G$-open in $X$ and such that the map
\[\Psi\colon G\times^{G_x}S\to G S,\quad
\Psi([g,s])=gs\] is a homeomorphism. The slice $S$ is
$G_x$-equivariantly homeomorphic to a $G_x$-locally closed subset
of $T_xV\cong V$ which contains $0$.
\end{theorem}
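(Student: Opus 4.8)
The plan is to reduce everything to the special case $X=V$, which is recalled above, by intersecting the ambient slice with $X$. Since $x\in\Mp$, the point $x$ lies in the zero fibre of the gradient map on all of $V$, so the slice construction for $X=V$ (which rests on Lemma~\ref{lemma:vollstReduzibel} and Corollary~\ref{corollary:SaturatedNeighborhood}) applies at $x$ and produces a $G_x$-invariant complement $W$ of $T_x(G\cdot x)$ in $T_xV\cong V$ together with a $G_x$-open neighbourhood $\tilde S$ of $0$ in $W$ for which $\tilde\Psi\colon G\times^{G_x}\tilde S\to G\tilde S$ is a homeomorphism onto a $G$-open subset $G\tilde S$ of $V$. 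I would then define the slice for $X$ to be $S:=\tilde S\cap X$, which is a $G_x$-invariant locally closed subset of $X$ and contains $x$, since $x$ lies in $\tilde S$ (it corresponds to $0$) as well as in $X$.

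The crucial observation is the orbit-theoretic identity $GS=G\tilde S\cap X$. The inclusion $GS\subseteq G\tilde S\cap X$ is immediate from $G$-invariance of $X$; conversely, if $y=g\cdot s\in X$ with $s\in\tilde S$, then $s=g^{-1}\cdot y\in X$ because $X$ is $G$-invariant, whence $s\in\tilde S\cap X=S$ and $y\in GS$. The same computation shows that the preimage $\tilde\Psi^{-1}(X)$ consists exactly of the classes $[g,s]$ with $s\in S$, that is, of the subset $G\times^{G_x}S$ of $G\times^{G_x}\tilde S$. As $\tilde\Psi$ is a homeomorphism and $X$ is locally closed, its restriction
\[\Psi\colon G\times^{G_x}S\longrightarrow G\tilde S\cap X=GS\]
is a homeomorphism, which gives the homeomorphism assertion of the theorem.

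It remains to verify that $GS$ is $G$-open in $X$ and to produce the linear model for $S$. Openness is clear, since $GS=G\tilde S\cap X$ is the trace on $X$ of the open set $G\tilde S$. For saturation I would use that, $X$ being $G$-semistable and hence $G$-locally closed in $V$, its quotient is the restriction of the quotient of $V$, so that $\pi_X^{-1}(\pi_X(A))=\pi_V^{-1}(\pi_V(A))\cap X$ for every $A\subseteq X$. Applying this to $A=GS\subseteq G\tilde S$ and using that $G\tilde S$ is $G$-saturated in $V$ gives $\pi_X^{-1}(\pi_X(GS))\subseteq G\tilde S\cap X=GS$, so $GS$ is $G$-saturated, and therefore $G$-open, in $X$. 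Finally, the $G_x$-equivariant translation $v\mapsto v-x$ identifies a neighbourhood of $x$ in $V$ with a neighbourhood of $0$ in $T_xV\cong V$, carrying $\tilde S$ onto its model and $X$ onto a $G_x$-locally closed subset; hence it carries $S=\tilde S\cap X$ onto a $G_x$-locally closed subset of $T_xV\cong V$ containing $0$, and by the characterisation of $G$-semistable subsets recorded after Theorem~\ref{theorem:HSch} this exhibits $S$ as a $G_x$-semistable space.

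The step I expect to be most delicate is the bookkeeping around saturation and the relative Hilbert quotient. One must keep track of the fact that being $G$-saturated in $X$ is strictly weaker than being $G$-saturated in $V$, and that orbit closures and the closedness of orbits computed inside $X$ coincide with those computed inside $V$. These compatibilities are precisely what the hypothesis that $X$ is $G$-semistable (equivalently, $G$-locally closed in $V$) together with Theorem~\ref{theorem:HSch} provide, so the work lies in invoking them in the correct order rather than in any genuinely new construction.
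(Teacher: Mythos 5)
Your proof is correct, but there is nothing in the paper to compare it against line by line: the paper only recalls the slice construction for the linear case $X=V$ from \cite{HSch} and then quotes the general statement from \cite{S08} (Corollary~4.5, Lemma~4.9) without giving a proof. Your argument supplies exactly the missing reduction: choose the linear slice $\tilde S$ at $x$ so that $G\tilde S$ is $G$-saturated in $V$ and $\tilde S$ is $G_x$-locally closed in $T_xV\cong V$ (the paper records that both properties can be arranged, via Corollary~\ref{corollary:SaturatedNeighborhood}), set $S:=\tilde S\cap X$, and use $G$-invariance of $X$ to see that $\tilde\Psi^{-1}(X)=G\times^{G_x}S$ and $GS=G\tilde S\cap X$; saturation of $GS$ in $X$ then follows because $\pi_X$ is the restriction of $\pi_V$, which is legitimate precisely because a $G$-semistable $X$ is $G$-locally closed in $V$. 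Two small points should be made explicit, though neither is a gap. First, you tacitly identify the quotient topology on $G\times^{G_x}S$ with the subspace topology it inherits inside $G\times^{G_x}\tilde S$; this holds because the orbit map $G\times\tilde S\to G\times^{G_x}\tilde S$ is open (as any orbit map of a group action is) and $G\times S$ is a $G_x$-invariant, hence saturated, subset, so the restriction is again a quotient map onto its image. Second, for the final assertion you need that $X$ itself is $G_x$-locally closed in $V$, so that $(\tilde S-x)\cap(X-x)$ is an intersection of two $G_x$-locally closed sets and hence $G_x$-locally closed; this rests on $G_x$ being a \emph{closed compatible} subgroup of $G$ --- Lemma~\ref{lemma:CompIsotropy}, which is exactly where the hypothesis $x\in\Mp$ enters --- combined with the paper's remark that a $G$-semistable space is $H$-semistable for every closed compatible subgroup $H$ of $G$. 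With those two justifications recorded, your proof is complete and is the natural way to bootstrap the theorem from the case $X=V$ that the paper does present.
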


With the same notation as in the Slice Theorem, we call the data
$(G_x,S,V)$ a slice model at $x$. We will identify $S\subset X$
with the corresponding $G_x$-locally closed subset of $V$ without
further mentioning it. Let $(G_x,S,V)$ be a slice model at
$x\in\Mp$ and $g\in G$. Then $G_{gx}=gG_xg^{-1}$ and $g\cdot S$
contains $gx$. Then we get a homeomorphism $G\times^{G_{gx}}gS\to
GS$. This shows that the assumption $x\in\Mp$ could be replaced by
the assumption that $G\cdot x$ is closed. Note that $G_{gx}$ is
not necessarily a compatible subgroup of $G$.

\subsection{Locally semistable spaces}

Let $X$ be a topological
$G$-space such that the topological Hilbert quotient $\pi\colon
X\to\quot XG$ exists. We call $X$ a \emph{locally $G$-semistable
space} if every $x\in X$ has a $G$-open neighborhood $W$
which admits the structure of a $G$-semistable space, i.\,e.\
there exists a complex reductive group $U^\ce$ containing $G$ as a
compatible subgroup and a holomorphic $U^\ce$-representation space
such that $W$ is $G$-equivariantly homeomorphic to a
$G$-locally closed subset of $V$.

\begin{example}
Let $X$ be an affine complex variety equipped with a regular
action of a complex reductive group $G$ such that the algebraic
Hilbert quotient exists. Then the topological Hilbert quotient
exists and topologically coincides with the algebraic quotient.
Moreover, $X$ can be $G$-equivariantly embedded into a regular
$G$-representation, so $X$ is a $G$-semistable space. More
generally, if $X$ is an arbitrary complex variety, such that the
good quotient (see \cite{BB}) exists, then $X$ is a locally
$G$-semistable space.

Similarly, a complex space with a holomorphic action of a complex
reductive group $G$ such that the analytic Hilbert quotient
exists, is a locally $G$-semistable space. Here this follows from
\cite{Snow}.
\end{example}

\begin{lemma}\label{lemma:HGradientSubspace}
Let $X$ be a locally $G$-semistable space and let $H$ be a closed
compatible subgroup of $G$. Then $X$ is a locally $H$-semistable
space.
\end{lemma}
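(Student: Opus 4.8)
The plan is to reduce the claim to the local semistable structure, which I can do because every relevant condition has been set up to be local in nature. Recall that $X$ being locally $G$-semistable means two things: the topological Hilbert quotient $\pi\colon X\to\quot XG$ exists, and each point has a $G$-open neighborhood carrying the structure of a $G$-semistable space inside some holomorphic $U^\ce$-representation $V$. To prove $X$ is locally $H$-semistable, I must verify the analogous two conditions for $H$. The first task is existence of the quotient $X\to\quot XH$; the second is to produce, around each point, an $H$-open neighborhood that is an $H$-semistable space.

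First I would attack the local model, since that is where the essential algebra lives. Fix $x\in X$ and take a $G$-open neighborhood $W$ that is $G$-equivariantly homeomorphic to a $G$-locally closed subset of a holomorphic $U^\ce$-representation $V$, where $G$ sits in $U^\ce$ as a closed compatible subgroup. Now $H$ is a closed compatible subgroup of $G$, hence (by transitivity of compatibility recorded in the real-reductive-groups subsection) a compatible subgroup of $U^\ce$ itself. The remark following Theorem~\ref{theorem:HSch} states that for a fixed ambient $V$, a subset is a semistable space exactly when it is locally closed in the saturated sense; and the discussion right after Lemma~\ref{lemma:CompIsotropy} observes that a $G$-semistable space is automatically an $H$-semistable space for any closed compatible subgroup $H$ of $G$. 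So $W$, viewed inside the same $V$, is already an $H$-semistable space. The subtlety I would flag here is that $G$-open and $H$-open are \emph{not} the same notion: a $G$-saturated set need not be $H$-saturated, because $H$-orbit closures can intersect where $G$-orbit closures do not. So $W$ itself is an $H$-semistable space but possibly not an $H$-open neighborhood of $x$.

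To fix this, I would shrink $W$ to an $H$-open set. Apply Corollary~\ref{corollary:SaturatedNeighborhood} in the $H$-semistable space $W$: since $H\cdot x$ need not be closed in $W$, I first pass to a point whose $H$-orbit is closed, or more directly I invoke that corollary in the form guaranteeing that every $H$-invariant open neighborhood of a closed $H$-orbit contains an $H$-open neighborhood. Concretely, within $W$ the $H$-quotient map $\pi_H\colon W\to\quot WH$ exists (because $W$ is $H$-semistable), and I can take the $H$-saturation of a small open ball around $x$, intersected with $W$, to obtain an $H$-open subset $W'\subseteq W$ containing $x$; as an $H$-locally closed subset of $V$ it is again an $H$-semistable space. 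This supplies the required local model for $H$ at $x$.

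The remaining global point is existence of the quotient $X\to\quot XH$, i.e.\ that the relation $x\sim_H y \Leftrightarrow \overline{H\cdot x}\cap\overline{H\cdot y}\neq\emptyset$ is an equivalence relation on all of $X$. I expect this transitivity verification to be the main obstacle, since the local models only control behavior inside $H$-open charts and the relation is a priori global. The plan is to glue the local quotients: the $H$-open sets $W'$ constructed above cover $X$, on each the $H$-quotient exists by the Slice-Theorem machinery, and I would check that these local quotients are compatible on overlaps (two $H$-open semistable charts agree on their intersection because both restrict from the ambient closed-orbit description, and $H$-saturation is preserved under passing to $H$-open subsets). Transitivity of $\sim_H$ then follows because any chain $\overline{H\cdot x}\cap\overline{H\cdot y}\neq\emptyset$, $\overline{H\cdot y}\cap\overline{H\cdot z}\neq\emptyset$ forces $x,y,z$ into a common $H$-open chart via the unique-closed-orbit property (Theorem~\ref{theorem:HSch}\eqref{thm:HSch,item:1} applied to $H$), where transitivity is already known. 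Assembling the local quotients into a single $\quot XH$ with the quotient topology completes the argument, establishing that $X$ is locally $H$-semistable.
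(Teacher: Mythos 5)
Your first step---observing that the $G$-open chart $W$, viewed inside the same representation $V$, is an $H$-semistable space---matches the paper. But the ``subtlety'' you flag immediately afterwards is backwards, and this is a genuine error, not a stylistic one. You claim that a $G$-saturated set need not be $H$-saturated ``because $H$-orbit closures can intersect where $G$-orbit closures do not.'' Since $H\subset G$ we have $\overline{H\cdot x}\subseteq\overline{G\cdot x}$ for every $x$, so if $\overline{H\cdot x}\cap\overline{H\cdot y}\neq\emptyset$ then a fortiori $\overline{G\cdot x}\cap\overline{G\cdot y}\neq\emptyset$: the $H$-relation \emph{refines} the $G$-relation, hence every $G$-saturated set is a union of $H$-classes, i.e.\ $H$-saturated. (The implication that can fail is the opposite one: $H$-saturated need not be $G$-saturated.) This single observation is essentially the paper's whole proof: the quotient $X\to\quot XH$ exists because $X$ is covered by $G$-open sets which are $H$-semistable spaces (and, as above, $H$-saturated, so transitivity of $\sim_H$ can be checked inside one chart), and then the very same charts serve as the $H$-open neighborhoods required by the definition. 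No shrinking is needed.

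The error is not harmless to your argument, because the repair machinery you build on top of it does not actually close the gap it is meant to close. Your $W'$ is constructed to be $H$-saturated \emph{relative to} $W$; the definition of a locally $H$-semistable space requires neighborhoods that are $H$-saturated \emph{in} $X$, and promoting relative saturation to absolute saturation needs precisely the implication ($G$-saturated $\Rightarrow$ $H$-saturated) that you denied. Two smaller points: taking ``the $H$-saturation of a small open ball'' does not in general produce an open set---saturations of open sets need not be open, which is exactly why Corollary~\ref{corollary:SaturatedNeighborhood} requires proof and applies only at closed orbits; and your gluing/transitivity step (forcing $x,y,z$ into a common chart) again silently uses that the charts are $H$-saturated in $X$, equivalently that $x\sim_H y$ implies $x\sim_G y$. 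So the architecture can be repaired, but only by asserting the fact you rejected, at which point the argument collapses to the paper's one-paragraph proof.
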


\begin{proof}
If $X$ is a $G$-semistable space, then it follows from the
definition that it is also an $H$-semistable space. In particular,
the quotient with respect to the action of $H$ exists. Since a
locally $G$-semistable space $X$ is covered by $G$-open subsets
which are $G$-semistable spaces, it follows that the quotient
$X\to\quot XH$ exists. Since a $G$-saturated subset is also
$H$-saturated, the claim follows.
\end{proof}

The notion of the zero fiber of a gradient map does not make sense
for a locally $G$-semistable space. As a substitute, we define
$X_{cc}\subset X$ to be the set of points $x\in X$ such that
$G\cdot x$ is closed and such that $G_x$ is compatible. For a
$G$-semistable space we have $\Mp\subset X_{cc}$. In particular,
in a locally $G$-semistable space each closed $G$-orbit intersects
$X_{cc}$ since it has a neighborhood which is a $G$-semistable
space. Conversely for a locally $G$-semistable space $X$ and an
$x\in X_{cc}$ there exists a $G$-saturated open neighborhood of
$x$ which is a $G$-semistable space such that $x\in \Mp$. For
this, we first observe that there exists a slice model $(G_x,S,V)$
at $x\in X_{cc}$ since by definition there exists a $G$-open
neighborhood of $x$ which admits the structure of a $G$-semistable
space containing $G\cdot x$ as a closed orbit. In particular the
$G$-action on this neighborhood is given by restriction of a
holomorphic $U^\ce$-representation, where $U^\ce$ is complex
reductive and contains $G$ as a compatible subgroup. The complex
analytic Zariski closure $\overline G_x^Z$ of $G_x$ in $U^\ce$ is
compatible, contains $G_x$ and is contained in $(U^\ce)_x$.
Therefore $G\cap\overline G_x^Z=G_x$. Since $V$ is a holomorphic
$\overline G_x^Z$-representation space, we get an embedding
$G\times^{G_x}S\hookrightarrow U^\ce\times^{\overline G_x^Z}V$. By
Lemma~1.16 in \cite{Sjamaar}, there exists a proper holomorphic
$U^\ce$-equivariant embedding of $U^\ce\times^{\overline G_x^Z}V$
into a holomorphic $U^\ce$-representation space such that $[e,0]$
has minimal distance to $0$. Then it follows from the definition
of the gradient map that $[e,0]\in\Mp$. Moreover, $G\times^{G_x}S$
is $G$-locally closed in the holomorphic $U^\ce$-representation.
This shows that the slice neighborhood $GS$ is a $G$-open
neighborhood of $x$ which admits the structure of a $G$-semistable
space such that $x\in\Mp$.

Moreover, we observe that a topological $G$-space $X$ with
topological Hilbert quotient $\pi\colon X\to\quot XG$ is a locally
$G$-semistable space if and only if for each $x\in X$ there exists
a complex reductive group $U^\ce$ containing $G$ as a compatible
subgroup and a $G$-open neighborhood of $x$ in $X$ which is
$G$-equivariantly homeomorphic to $G\times^HS$ where $H$ is a
compatible subgroup of $G$ such that $G\cap\overline H^Z=H$ and
such that $S$ is $H$-locally closed in a holomorphic $\overline
H^Z$-representation space $V$. Here the Zariski closure of $H$ is
taken in $U^\ce$.

\begin{example}
Let $X$ be a smooth $G$-manifold and assume that the action of $G$
is proper. Then the topological Hilbert quotient coincides with
the geometric quotient and at each $x\in X$ there exists a slice
$G\times^{G_x}S\to GS$ where $S$ is an open neighborhood of $0$ in
a $G_x$-representation space $V$ by \cite{Palais}. The isotropy
$G_x$ is compact, so the $G_x$-representation $V$ extends to a
$(G_x)^\ce$-representation $V^\ce$. Then $X$ is a locally
$G$-semistable space since $(G_x)^\ce=\overline G_x^Z$ where the
Zariski closure is taken in $(U_G)^\ce$.
\end{example}

In a $G$-semistable space, a closed $G$-orbit intersects $\Mp$ in
a unique $K$-orbit. In order to describe the intersection $G\cdot
x\cap X_{cc}$, we need the following two lemmas.

\begin{lemma}\label{lemma:normalisator}
Let $H$ be a compatible subgroup of $G$. Then the normalizer
$\nor_G(H)$ of $H$ in $G$ is compatible.
\end{lemma}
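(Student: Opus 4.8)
The plan is to show that $N:=\nor_G(H)$ is closed and $\theta$-stable, and then to produce its Cartan decomposition $N=(N\cap K)\exp(\mathfrak n\cap\liep)$, where $\mathfrak n$ is the Lie algebra of $N$; since $N\cap K$ is compact this exhibits $N$ as compatible (equivalently, by the criterion quoted after the definition of compatibility, a closed $\theta$-stable subgroup is compatible as soon as it has finitely many components). Closedness is immediate, the normalizer of the closed subgroup $H$ being closed. For $\theta$-stability I first record that $H$ is itself $\theta$-stable: writing $H=L\exp(\lieq)$ with $L\subset K$ and $\lieq\subset\liep$, the involution $\theta$ fixes $L$ pointwise and sends $\exp(\lieq)$ to $\exp(-\lieq)=\exp(\lieq)$, so $\theta(H)=H$. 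Since $\theta(G)=G$ as well, any $g\in N$ satisfies $\theta(g)H\theta(g)^{-1}=\theta(gHg^{-1})=\theta(H)=H$, hence $\theta(g)\in N$.

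The core is the polar decomposition of an arbitrary $g=k\exp(\xi)\in N$ with $k\in K$ and $\xi\in\liep$. Using $\theta$-stability of $N$ one computes $\theta(g)^{-1}g=\exp(2\xi)\in N$, so $\exp(2\xi)$ normalizes $H$. Conjugation by an element of $U^\ce$ is a regular automorphism and therefore preserves complex Zariski closures, so $\exp(2\xi)$ normalizes $\overline{H}^Z$, i.e. $\exp(2\xi)\in M^\ce:=\nor_{U^\ce}(\overline{H}^Z)$. The group $M^\ce$ is the normalizer of a closed complex algebraic subgroup, hence is itself complex algebraic with finitely many components, and it is $\theta$-stable because $\overline{H}^Z$ is; thus $M^\ce$ is compatible. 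By uniqueness of the global Cartan decomposition in $M^\ce$, the relation $\exp(2\xi)\in M^\ce$ forces $2\xi$, and hence $\xi$, into the Lie algebra of $M^\ce$; therefore $\exp(t\xi)$ normalizes $\overline{H}^Z$ for every $t\in\er$.

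It remains to upgrade this to $\exp(t\xi)\in N$, i.e. to see that $\exp(t\xi)$ normalizes $H$ and not merely $\overline{H}^Z$. Set $P:=G\cap\overline{H}^Z$. This is an intersection of two compatible subgroups of $U^\ce$, and such an intersection is again compatible: any element of a compatible subgroup has both of its global Cartan factors inside that subgroup, so intersecting yields the Cartan decomposition of $P$. In particular $P$ has finitely many components, its identity component is $H_0$, and $H\subseteq P$. Each $\exp(t\xi)$ normalizes $G$ and $\overline{H}^Z$, hence normalizes $P$ and its characteristic identity component $H_0$, and so induces an automorphism of the finite group $P/H_0$. The map sending $t$ to this automorphism is continuous from the connected set $\er$ into the finite group $\mathrm{Aut}(P/H_0)$, hence constant equal to the identity; therefore $\exp(t\xi)$ fixes the subgroup $H/H_0$ of $P/H_0$ and conjugates $H$ to $H$. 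Thus $\exp(t\xi)\in N$ for all $t$, which gives $\xi\in\mathfrak n\cap\liep$ and $k=g\exp(-\xi)\in N\cap K$, completing the decomposition $N=(N\cap K)\exp(\mathfrak n\cap\liep)$.

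I expect the main obstacle to lie in the facts surrounding the complex Zariski closure $\overline{H}^Z$: that it is a $\theta$-stable complex algebraic subgroup whose $U^\ce$-normalizer is therefore compatible, and that $G\cap\overline{H}^Z$ is again compatible. The naive hope that $G\cap\overline{H}^Z=H$ is false -- already for a split Cartan subgroup this intersection picks up extra components -- which is exactly why the component bookkeeping for $H$, as opposed to $H_0$, has to be handled separately, here by the continuity-into-a-finite-group argument rather than by a direct identification.
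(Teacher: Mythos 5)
Your reduction of the lemma to producing the Cartan decomposition of $N=\nor_G(H)$ is fine, and so is the first half of your argument: $N$ is $\theta$-stable, $\theta(g)^{-1}g=\exp(2\xi)\in N$, and since $M^\ce=\nor_{U^\ce}(\overline H^Z)$ is a $\theta$-stable algebraic (hence compatible) subgroup, uniqueness of the global Cartan decomposition forces $\xi$ into the Lie algebra of $M^\ce$, so $\exp(t\xi)$ normalizes $\overline H^Z$ for all $t$. The gap is in the descent from $\overline H^Z$ back to $H$: you assert that $P:=G\cap\overline H^Z$ has identity component $H_0$, so that $P/H_0$ is a finite group on which the connected family $t\mapsto\exp(t\xi)$ must act trivially. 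That assertion is false in general: the discrepancy between $H$ and $G\cap\overline H^Z$ is not only a matter of finitely many extra components (the phenomenon you flag for a split Cartan subgroup) but can be positive-dimensional, because $\lieg\cap\mathrm{Lie}(\overline H^Z)$ can be strictly larger than $\lieh$. Concretely, take $U^\ce=\Gl_2(\ce)$, let $G=\exp(\liep_0)$ where $\liep_0$ is the space of real diagonal matrices (so $K=\{e\}$), and let $H=\exp(\er\xi_0)$ with $\xi_0=\mathrm{diag}(1,\sqrt2)$; this is a closed compatible subgroup of $G$. No nontrivial character $z^aw^b$ of the diagonal torus vanishes on $H$ (that would force $a+\sqrt 2\,b=0$), so $\overline H^Z=(\ce^*)^2$, whence $P=G\cap\overline H^Z=G$, whose identity component is $G\supsetneq H=H_0$, and $P/H_0\cong\er$ is not finite. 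In this instance the conclusion happens to hold because everything is abelian, but your argument, continuity into a finite automorphism group, has nothing to act on, so the proof is invalid as written.

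This is precisely the point where the paper's proof takes a different, centralizer-based route, and the contrast explains why. Normalizing $\overline H^Z$ is genuinely weaker than normalizing $H$, whereas for centralizers the two notions coincide, $\zet_G(H)=\zet_G(\overline H^Z)$, since the centralizer of any single element is Zariski closed and therefore contains $\overline H^Z$ as soon as it contains $H$. The paper reduces (via $\theta$-stability, as you do) to showing that $N$ has finitely many components, invokes Poguntke's theorem that $H\cdot\zet_G(H)$ has finite index in $\nor_G(H)$, and then verifies that $\zet_G(H)=G\cap\zet_{\overline G^Z}(\overline H^Z)$ is compatible as an intersection of compatible subgroups. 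Some external input of this kind, relating $\nor_G(H)$ to a group that is algebraic up to \emph{finite} index, seems unavoidable: component bookkeeping alone cannot close your gap, because the gap between $H$ and $G\cap\overline H^Z$ need not be finite.
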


\begin{proof}
The normalizer $\nor_G(H)$ is invariant under the Cartan
involution $\theta$, so it suffices to show that is consists only
of finitely many connected components. By \cite{Po}, the group
$H\cdot \zet_G(H)$, where $\zet_G(H)$ denotes the centralizer of
$H$ in $G$, is of finite index in $\nor_G(H)$. Let $U^\ce$ be a
complex reductive group containing $G$ as a compatible subgroup.
Then $\zet_G(H)=\zet_G(\overline H^Z)=G\cap\zet_{\overline
G^Z}(\overline H^Z)$. The centralizer $\zet_{\overline
G^Z}(\overline H^Z)$ is invariant under the Cartan involution
$\theta$ and it has only finitely many connected components since
it is an algebraic group. Therefore it is compatible. Then
$\zet_G(H)$ is compatible since the intersection of two compatible
subgroups is compatible. Thus $H\cdot\zet_G(H)$ consists of only
finitely many connected components and the claim follows.
\end{proof}

\begin{lemma}\label{lemma:compatibleConjugated}
Let $H$ be a compatible subgroup of $G$. Let $g\in G$. Then
$gHg^{-1}$ is compatible if and only if $g\in K\cdot\nor_G(H)$. In
particular, if $gHg^{-1}$ is compatible, then $gHg^{-1}$ and $H$
are conjugate in $K$, so $gHg^{-1}=kHk^{-1}$ for some $k\in K$.
\end{lemma}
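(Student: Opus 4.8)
The plan is to work throughout with the Cartan decomposition $g=k\exp(\xi)$, $k\in K$, $\xi\in\liep$, and to translate the condition ``$gHg^{-1}$ is compatible'' into a purely algebraic condition on $\xi$ by means of the Cartan involution $\theta$.

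First I would dispose of the ``$\Leftarrow$'' direction together with the final ``in particular'' statement. If $g=kn$ with $k\in K$ and $n\in\nor_G(H)$, then $gHg^{-1}=knHn^{-1}k^{-1}=kHk^{-1}$. Since $\theta$ is an automorphism of $U^\ce$ fixing $U$ pointwise and $k\in K\subset U$, conjugation by $k$ commutes with $\theta$; as $H$ is $\theta$-stable, closed, and has finitely many connected components, the same holds for $kHk^{-1}$, which is therefore compatible. This settles the ``if'' direction, and since $k\in K$ it simultaneously shows that a compatible conjugate $gHg^{-1}$ is $K$-conjugate to $H$.

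For the converse, write $g=k\exp(\xi)$ and set $H':=\exp(\xi)H\exp(-\xi)$, so that $gHg^{-1}=kH'k^{-1}$. By the observation above, conjugation by $k\in K$ preserves compatibility, so $gHg^{-1}$ is compatible if and only if $H'$ is. Now $H'$ is closed and has finitely many components, being a conjugate of $H$, so by the characterization of compatible subgroups it is compatible precisely when it is $\theta$-stable. Since $d\theta=-\id$ on $\liep$ we have $\theta(\exp(\xi))=\exp(-\xi)$, and using $\theta(H)=H$ a direct computation gives $\theta(H')=\exp(-\xi)H\exp(\xi)$. Hence $\theta(H')=H'$ is equivalent to $\exp(2\xi)H\exp(-2\xi)=H$, that is, to $\exp(2\xi)\in\nor_G(H)$. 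I expect this translation --- the recognition that compatibility is detected entirely by $\theta$-invariance, while closedness and finiteness of the number of components come for free under conjugation --- to be the conceptual heart of the argument.

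It then remains to pass from $2\xi$ to $\xi$. By Lemma~\ref{lemma:normalisator} the normalizer $N:=\nor_G(H)$ is compatible, with Cartan decomposition $N=K_N\exp(\liep_N)$, where $K_N=N\cap K$ and $\liep_N\subset\liep$. Since $\exp(2\xi)\in\exp(\liep)\cap N$, I would write it in this decomposition as $k_N\exp(\eta)$ with $k_N\in K_N\subset K$ and $\eta\in\liep_N\subset\liep$; comparing with its Cartan decomposition inside $G$ (trivial $K$-part, $\liep$-part $2\xi$) and invoking the uniqueness of the Cartan decomposition of $G$ forces $k_N=e$ and $2\xi=\eta\in\liep_N$. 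As $\liep_N$ is a linear subspace, $\xi\in\liep_N\subset\mathrm{Lie}(N)$, whence $\exp(\xi)\in N=\nor_G(H)$ and $g=k\exp(\xi)\in K\cdot\nor_G(H)$. The only delicate point in this last paragraph is the identity $\exp(\liep)\cap N=\exp(\liep_N)$, which is exactly where compatibility of the normalizer --- hence the existence of its Cartan decomposition --- is needed; the rest is bookkeeping with the uniqueness of Cartan decompositions.
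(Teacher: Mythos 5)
Your proof is correct and follows essentially the same route as the paper's: reduce to $g=\exp(\xi)$ via the Cartan decomposition, use $\theta$-stability of $H$ and of its conjugate to obtain $\exp(2\xi)\in\nor_G(H)$, then combine Lemma~\ref{lemma:normalisator} with uniqueness of the Cartan decomposition to conclude $\exp(\xi)\in\nor_G(H)$; your final paragraph merely expands the paper's terse last sentence. The one point where you diverge is the step that conjugation by $k\in K$ preserves compatibility: you deduce it from the characterization of compatible subgroups ($\theta$-stable, closed, finitely many connected components), which silently adds the hypotheses that $H$ is closed and has finitely many components --- neither is part of the statement, and the paper's notion of compatibility allows non-closed subgroups (for $H=L\exp(\lieq)$, closedness is equivalent to compactness of $L$). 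The paper avoids this by the direct computation $kHk^{-1}=kLk^{-1}\exp(\Ad(k)\lieq)$, which exhibits a Cartan decomposition of $kHk^{-1}$ outright because $\Ad(k)$ preserves $\liep$; substituting this one line for your appeal to the characterization makes your argument fully general. Note also that in the converse you only need the implication compatible $\Rightarrow$ $\theta$-stable, which is immediate from the definition, so the characterization is not actually needed anywhere.
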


\begin{proof}
Let $H=L\exp(\lieq)$ be the Cartan decomposition of $H$. Then for
$k\in K$, we have $kHk^{-1}=kLk^{-1}\exp(\Ad(k)\lieq)$. Since the
adjoint action of $K$ stabilizes $\liep$, we conclude that
$kHk^{-1}$ is compatible. So for $g\in K\cdot \nor_G(H)$, the
group $gHg^{-1}$ is compatible.

Conversely, if $gHg^{-1}$ is compatible, we may assume
$g=\exp(\xi)$ where $\xi\in\liep$. The groups $H$ and $gHg^{-1}$
are $\theta$-stable since they are compatible. Therefore
$gHg^{-1}=\theta(gHg^{-1})=\theta(g)H\theta(g^{-1})$. Explicitly,
we have $\exp(\xi)H\exp(-\xi)=\exp(-\xi)H\exp(\xi)$ or
equivalently $\exp(2\xi)\in\nor_G(H)$. Since $\nor_G(H)$ is
compatible by Lemma~\ref{lemma:normalisator} and since
$\xi\in\liep$, we conclude that $\xi$ is contained in the Lie
algebra of $\nor_G(H)$ which implies $g\in\nor_G(H)$.
\end{proof}

It follows from Lemma~\ref{lemma:compatibleConjugated} that for
$x\in X_{cc}$ the orbit $G\cdot x$ intersects $X_{cc}$ in
$K\cdot\nor_G(G_x)\cdot x$. In particular, for $y\in G\cdot x\cap
X_{cc}$ the isotropy $G_y$ is conjugate to $G_x$ in $K$.

\section{Isotropy Stratification}

\subsection{The Isotropy Stratification Theorem}

Let $X$ be a locally $G$-semistable space and let $\pi\colon
X\to\quot XG$ be the  topological Hilbert quotient. For a closed
compatible subgroup $H$ of $G$ we define \[\xh:=\{x\in
X_{cc};\,G_x=H\}.\] Note that we have a partition
$X_{cc}=\bigcup_H \xh$, where the union is taken over all
compatible isotropy groups of points on closed orbits. We call the
$G$-saturated set
\[I_H(X):=\pi^{-1}(\pi(\xh))=\{x\in X;\,\overline{G\cdot
x}\cap\xh\neq\emptyset\}\] the \emph{$G$-isotropy stratum of $H$
in $X$}. In other words, $I_H(X)$ consists of those fibers of
$\pi$, where the unique closed orbit is of orbit type $G/H$. We
abbreviate $I_H(X)$ by $I_H$ and call $I_H$ a \emph{stratum}, if
no confusion is possible.

\begin{theorem}[Isotropy Stratification Theorem]\label{theorem:IsotropyStratification}
Let $X$ be a locally $G$-semistable space. Let $\mathcal I$ be an
index set and let $H_i$, $i\in\mathcal I$, be compatible subgroups
of $G$ such that $\{H_i;\, i\in\mathcal I\}$ is
    a set of representatives of conjugacy classes of
    isotropy groups of closed $G$-orbits in
    $X$. Then
\begin{enumerate}
\item $I_{H_i}$ is $G$-saturated and locally
closed.\label{thm:IsoStrat:1} \item
$X=\bigcup_{i\in\mathcal I}
    I_{H_i}$ and the union is
    disjoint and locally
    finite.\label{thm:IsoStrat:3} \item If
    $\overline{I_{H_i}}\cap I_{H_j}\neq \emptyset$ and
    $I_{H_i}\neq I_{H_j}$ then there exists a $g\in G$ with
    $gH_ig^{-1}\lvertneqq H_j$.\label{thm:IsoStrat:5}
\end{enumerate}
\end{theorem}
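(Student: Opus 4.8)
The plan is to establish the assertions in the order $(2)$, then local finiteness, then $(3)$, and finally to deduce the local closedness part of $(1)$; that each $I_{H_i}$ is $G$-saturated is built into the definition $I_{H_i}=\pi^{-1}(\pi(X^{<H_i>}))$. For the covering in $(2)$ I would start from an arbitrary $x\in X$ and pass to the unique closed orbit $C$ in the fiber $\pi^{-1}(\pi(x))$, which lies in $\overline{G\cdot x}$. Since $X$ is locally $G$-semistable, $C$ meets $X_{cc}$, so I may pick $y\in C$ with $G_y$ compatible; being the isotropy of a closed orbit, $G_y$ is conjugate to some $H_i$, and replacing $y$ by a suitable $G$-translate I arrange $G_y=H_i$, whence $y\in X^{<H_i>}\cap\overline{G\cdot x}$ and $x\in I_{H_i}$. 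For disjointness I take $x\in I_{H_i}\cap I_{H_j}$ and choose $y_i\in X^{<H_i>}\cap\overline{G\cdot x}$ and $y_j\in X^{<H_j>}\cap\overline{G\cdot x}$; both $G\cdot y_i$ and $G\cdot y_j$ are closed orbits in $\overline{G\cdot x}$, hence both equal $C$. Thus $y_i,y_j$ lie on one orbit and in $X_{cc}$, so by the remark following Lemma~\ref{lemma:compatibleConjugated} their isotropy groups $H_i,H_j$ are conjugate in $K$, and as the $H_i$ represent distinct classes this forces $i=j$.

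Local finiteness is where I expect the main difficulty, and I would reduce it to the linear situation via the Slice Theorem~\ref{slicetheorem}. Given $x_0$, I replace it by the point $x$ of the closed orbit in its fiber lying in $X_{cc}$ and choose a slice model $(G_x,S,V)$, so $GS\cong G\times^{G_x}S$ is a $G$-open neighborhood. Under this homeomorphism $G\cdot[g,s]$ is closed in $GS$ exactly when $G_x\cdot s$ is closed in $S$, and then $G_{[e,s]}=(G_x)_s$; consequently the $G$-isotropy strata meeting $GS$ correspond to the $G_x$-isotropy strata of $S$, and everything reduces to finiteness of the conjugacy classes of isotropy groups of closed $G_x$-orbits in the representation $V$. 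The hard part is precisely this finiteness, which I would obtain from the gradient-map picture: by Theorem~\ref{theorem:HSch} the closed $G_x$-orbits meet $\Mp$ in $K_x$-orbits for the linearly acting compact group $K_x$, so finiteness of $K_x$-isotropy types for a compact linear action, together with the description of compatible isotropy in Lemma~\ref{lemma:CompIsotropy} (which recovers $(G_x)_s$ from its compact part and the vanishing conditions in $\liep$), yields finiteness of the $G_x$-isotropy types.

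For $(3)$ I take $z\in\overline{I_{H_i}}\cap I_{H_j}$ and let $y$ be the point of the closed orbit in the fiber of $z$ with $G_y=H_j$ (available since $z\in I_{H_j}$); because $\overline{I_{H_i}}$ is closed and $G$-invariant it contains $\overline{G\cdot z}$ and hence $y$, so $y\in\overline{I_{H_i}}\cap X^{<H_j>}$. Realizing a slice neighborhood of $y$ as a $G$-semistable space, I approximate $y$ by a sequence in $I_{H_i}$, replace each term by a point of $\Mp$ on the closed orbit of type $G/H_i$ in its fiber, and use properness of $\pi|\Mp$ from Theorem~\ref{theorem:HSch} to extract a convergent subsequence $w_n\to w$ with $w$ on $G\cdot y$. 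A slice at $w$ then exhibits each $G_{w_n}$ as conjugate to a subgroup of $G_w$; since $G_{w_n}$ is conjugate to $H_i$ and $G_w$ (with $w\in K\cdot y$) is conjugate to $H_j$, this produces $g\in G$ with $gH_ig^{-1}\subseteq H_j$. The inclusion is strict, for equality would make $H_i,H_j$ conjugate, hence $i=j$ and $I_{H_i}=I_{H_j}$, contrary to assumption.

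Finally I would derive local closedness of $I_{H_i}$ from $(2)$, $(3)$ and local finiteness. Let $\mathcal A$ be the union of those strata $I_{H_j}$ for which $H_i$ is strictly subconjugate to $H_j$; by $(3)$, $\overline{I_{H_i}}\setminus I_{H_i}\subseteq\mathcal A$. Using local finiteness together with $(3)$ and the transitivity of strict subconjugacy, I check that $\mathcal A$ is closed: any $z\in\overline{\mathcal A}$ lies, by local finiteness, in $\overline{I_{H_j}}$ for some $I_{H_j}\subseteq\mathcal A$, hence its own stratum $I_{H_k}$ satisfies $k=j$ or $H_j\subsetneq$-subconjugate $H_k$, and in either case $H_i$ is strictly subconjugate to $H_k$, so $z\in\mathcal A$. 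Then $W:=X\setminus\mathcal A$ is $G$-open, contains $I_{H_i}$ by disjointness, and satisfies $\overline{I_{H_i}}\cap W=I_{H_i}$; thus $I_{H_i}$ is closed in the $G$-open set $W$ and therefore $G$-locally closed.
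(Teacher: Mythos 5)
Your argument has a genuine gap at its central step, the local finiteness. You reduce, exactly as the paper does, to showing that a $G$-representation $V$ which is the restriction of a holomorphic $U^\ce$-representation has only finitely many strata (this is Proposition~\ref{proposition:RepresentationFinite} in the paper). But your proposed proof of this core finiteness --- ``finiteness of $K_x$-isotropy types for a compact linear action together with Lemma~\ref{lemma:CompIsotropy}'' --- does not work. Lemma~\ref{lemma:CompIsotropy} writes $G_x=K_x\exp(\liep_x)$ for $x\in\Mp$, so the conjugacy class of $G_x$ is determined by the pair $(K_x,\liep_x)$, not by $K_x$ alone, and finiteness of the compact parts up to conjugacy gives no control whatsoever over the noncompact parts $\liep_x$. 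Indeed there are continuous families of pairwise non-conjugate compatible subgroups of $G$ all having the same compact part: for $G=\Sl_n(\er)$, $n\geq 3$, the groups $\exp(\er\xi)$ with $\xi\in\liep$ of distinct eigenvalue ratios are pairwise non-conjugate (by Lemma~\ref{lemma:compatibleConjugated}, conjugacy in $G$ would force $K$-conjugacy of the lines $\er\xi$, and the $K$-orbits on such lines form a continuum), yet every one of them has trivial compact part. So the actual content of the finiteness statement --- that the pairs $(K_x,\liep_x)$ arising from closed orbits in a \emph{fixed} representation fall into finitely many conjugacy classes --- is precisely what your argument leaves unproven. The paper proves it by induction on the dimension and the number of connected components of $G$: after splitting off $X^G$ one uses that strata and $\Mp$ are cones, covers the compact set $\Mp\cap S^n$ by finitely many slice neighborhoods, and observes that the slice isotropy groups $G_v$ at points of $\Mp\cap S^n$ are \emph{proper} compatible subgroups, so the inductive hypothesis applies to the slice representations. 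Some argument of this strength is unavoidable here.

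The rest of your proposal is sound, and where it differs from the paper it differs correctly. Your covering and disjointness arguments, and your proof of (3) via a sequence in $\Mp\cap I_{H_i}$, properness of $\pi|\Mp$ (Theorem~\ref{theorem:HSch}), and a slice at the limit point, are in substance the paper's arguments. Your derivation of local closedness is genuinely different: you obtain $I_{H_i}=\overline{I_{H_i}}\cap(X\setminus\mathcal A)$, with $\mathcal A$ the union of the strata to which $H_i$ is strictly subconjugate, purely formally from (2), (3), local finiteness, and the fact that a compatible group cannot be conjugate to a proper subgroup of itself; this is correct and avoids the paper's direct slice computation, where $I_{G_x}(S)=S\cap(V^{G_x}+\mathcal N)$ is shown to be locally closed using real algebraicity of the nullcone $\mathcal N$. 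The trade-off is that your route makes (1) depend on local finiteness and (3), whereas the paper's does not; in particular, with the finiteness step broken, your proof of (1) collapses along with it, so the gap above is the one thing you must repair.
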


\begin{example}
Consider the adjoint representation of a connected semisimple
group $G$. There exist finitely many $\theta$-stable Cartan
subalgebras $\lieh_1,\ldots,\lieh_n$ in the Lie algebra $\lieg$ of
$G$ such that each Cartan subalgebra is conjugate to one of these.
A $G$-orbit in $\lieg$ is closed if and only if it intersects a
Cartan subalgebra. If $\xi\in \lieh_i$ is a regular element, then
the isotropy $G_\xi$ equals the centralizer $H_i:=Z_G(\lieh_i)$ of
$\lieh_i$ in $G$. Since a neighborhood of $\xi$ in $\lieh_i$ is a
slice for the $G$-action, we conclude that the strata
$I_{H_1},\ldots,I_{H_n}$ are the open strata in $\lieg$.

In the special case where $G$ is complex reductive, all Cartan
subalgebras are conjugate to each other, hence there exists an
open and dense stratum.
\end{example}

\subsection{The proof of Theorem~\ref{theorem:IsotropyStratification}}

First note that $I_{H_i}$ is $G$-saturated by definition and that
$X$ is the union of the strata since each orbit contains a closed
orbit in its closure and since each closed orbit intersects
$X_{cc}$. The union is disjoint for if two strata $I_H$ and
$I_{H'}$ intersect, the intersection contains a closed orbit,
which implies that $H$ and $H'$ are conjugate in $G$.

Locally the stratification is determined by the stratification of
a slice. For this, let $(G_x,S,V)$ be a slice model at $x\in
X_{cc}$. Since $S$ is a $G_x$-semistable space, we have the notion
of $G_x$-isotropy strata in $S$. Moreover, a $G$-orbit $G\cdot y$
with $y\in S$ is closed in $X$ if and only if the $G_x$-orbit
$G_x\cdot y$ is closed in $S$. Note also that $(G_x)_y=G_y$.
Identifying $GS$ with $G\times^{G_x}S$, we get
\[I_H(X)\cap GS=\bigcup_{H_i}G\times^{G_x}I_{H_i}(S),\] for a compatible subgroup $H$ of $G$. Here the
union is taken over all subgroups $H_i$ of $G_x$ such that $H_i$
is conjugate to $H$ in $G$. In particular, we obtain
$I_{G_x}(X)\cap GS=G\cdot I_{G_x}(S)$. The stratum $I_{G_x}(S)$ is given by
the intersection of $S$ with $I_{G_x}(V)$. But the closed $G_x$-orbits
in $V$ with isotropy conjugate to $G_x$ are the $G_x$-fixed points
$V^{G_x}$ and the orbits which contain a fixed point in their closure
are orbits through points $v\in V$ which are the sum of a fixed
point and an element of the nullcone $\mathcal N:=\{v\in V;
0\in\overline{G_x\cdot v}\}$. This implies $I_{G_x}(S)=S\cap (V^{G_x}+\mathcal
N)$, where $V^{G_x}+\mathcal N:=\{v_1+v_2; v_1\in V^{G_x}, v_2\in\mathcal
N\}$. Since the nullcone is real algebraic in $V$ (Lemma~7.1 in
\cite{Schuetzdeller}), this shows that $I_{G_x}(S)$ is locally
closed which in turn implies that $I_{G_x}(X)$ is locally closed.

We show that the stratification is locally finite. If $X$ is a
$G$-representation space, the strata are cones which by definition
means that they are invariant under multiplication with positive
real numbers. Then the stratification is locally finite at $0\in
X$ if and only if there exist only finitely many strata in $X$. An
arbitrary locally $G$-semistable space is covered by slice
neighborhoods and the strata in a slice neighborhood are
determined by the strata in a slice. A slice is a subspace of a
representation space, so local finiteness follows from

\begin{proposition}\label{proposition:RepresentationFinite}
Assume $X$ is a $G$-representation which is given by restriction
of a holomorphic $U^\ce$-representation. Then there exist only
finitely many strata in $X$.
\end{proposition}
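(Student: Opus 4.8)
The plan is to argue by induction on $\dim G$, exploiting the cone structure of $\Mp$ to reduce to a compact set and the Slice Theorem to pass to isotropy groups of strictly smaller dimension. First I would record the cone structure. Since the representation is linear, the gradient map $\mu_\liep$ is homogeneous of degree two, so $\Mp=\mu_\liep^{-1}(0)$ is a closed cone and $G_{tx}=G_x$ for every $t\neq 0$. Hence the isotropy group of a nonzero point of $\Mp$ coincides with that of its normalization on the unit sphere $\Sigma$, and $\Mp\cap\Sigma$ is compact. By Theorem~\ref{theorem:HSch} every closed $G$-orbit meets $\Mp$, and a nonzero closed orbit meets $\Mp\setminus\{0\}$; scaling into $\Sigma$ shows that every stratum, with the possible exception of $I_G(V)=V^G$, is represented by some $x\in\Mp\cap\Sigma$. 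It therefore suffices to show that the points of $\Mp\cap\Sigma$ meet only finitely many strata, the stratum through the origin contributing at most one more.

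Next I would set up the induction, the statement being that for every compatible subgroup of a complex reductive group and every holomorphic representation, the induced representation has finitely many strata. For the inductive step, I cover the compact set $\Mp\cap\Sigma$ by slice neighborhoods: for each $x\in\Mp\cap\Sigma$ the Slice Theorem (Theorem~\ref{slicetheorem}) provides a slice model $(G_x,S_x,V)$ with $GS_x$ a $G$-open neighborhood of $x$, and by compactness finitely many $GS_{x_1},\dots,GS_{x_r}$ cover $\Mp\cap\Sigma$. By the local description $I_H(V)\cap GS_{x_j}=\bigcup G\times^{G_{x_j}}I_{H_i}(S_{x_j})$ established in the proof of Theorem~\ref{theorem:IsotropyStratification}, the number of strata of $V$ meeting $GS_{x_j}$ is bounded by the number of $G_{x_j}$-strata of $S_{x_j}$, which is in turn bounded by the number of strata of the $G_{x_j}$-representation $V$. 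So it remains to bound the latter for each $j$.

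Here I would distinguish two cases according to $\dim(G\cdot x_j)$. If $\dim(G\cdot x_j)>0$, then by Lemma~\ref{lemma:CompIsotropy} the group $G_{x_j}$ is a compatible subgroup of $U^\ce$ with $\dim G_{x_j}=\dim G-\dim(G\cdot x_j)<\dim G$, so $V$ restricted to $G_{x_j}$ is a representation of the required type of strictly smaller acting dimension and the induction hypothesis yields finitely many strata. If $\dim(G\cdot x_j)=0$, the orbit is discrete, so the identity component $G^0$ fixes $x_j$ and every point whose orbit meets this slice has isotropy containing $G^0$; since $G$ has only finitely many connected components, there are only finitely many subgroups $H$ with $G^0\subseteq H\subseteq G$, hence only finitely many isotropy groups and strata. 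The base case $\dim G=0$ is subsumed in the second case. Combining the finitely many contributions over $j=1,\dots,r$ with the stratum $I_G(V)$ gives finiteness.

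The main obstacle is precisely the second case. The slice model realizes $S_x$ as a $G_x$-locally closed subset of the \emph{same} ambient space $V$, so it does not reduce $\dim V$; the induction must therefore be carried on $\dim G$, and then the points with zero-dimensional orbit, where $\dim G_x=\dim G$, are inaccessible to the inductive hypothesis. The observation that rescues the argument is that such points are fixed by $G^0$, so their isotropy groups lie between $G^0$ and $G$ and are finite in number because $G$ is compatible, hence has finitely many components.
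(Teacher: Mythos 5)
Your skeleton---cone structure of the strata and of $\Mp$, reduction to the compact set $\Mp\cap\Sigma$, a finite cover by slice neighborhoods, and induction on the acting group---is exactly the paper's, but your Case 2 contains a genuine gap, and it sits precisely at the point you yourself identify as the crux. The claim ``every point whose orbit meets this slice has isotropy containing $G^0$'' is false: in a slice model $(G_{x_j},S,V)$ the $G$-isotropy of a point $[g,s]\in G\times^{G_{x_j}}S$ is $g(G_{x_j})_s g^{-1}$, so it is \emph{contained in} a conjugate of $G_{x_j}$; nothing forces it to contain $G^0$, even when $G^0$ fixes $x_j$. Concretely, let $G=\Sl_2(\er)$ act on $V=\liesl_2(\er)\oplus\er$ (adjoint plus trivial representation) and take $x_j=(0,1)\in\Mp\cap\Sigma$, a $G$-fixed point, hence with discrete orbit. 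Every neighborhood of $x_j$ meets the closed orbit through $(\epsilon\,\mathrm{diag}(1,-1),1)$, whose isotropy is the diagonal torus, and the closed orbit through $(\epsilon\eta,1)$ with $\eta$ elliptic, whose isotropy is $\So(2)$; neither contains $G^0$, so the strata meeting the slice at $x_j$ are not indexed by subgroups between $G^0$ and $G$ and your counting collapses. (The conclusion is still true there---three strata meet the slice---but your argument does not prove it; also, as a side remark, $I_G(V)$ is $\pi^{-1}(\pi(V^G))=V^G+\mathcal N$, not $V^G$.)

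The paper closes exactly this hole by a preliminary reduction you are missing: by complete reducibility (Lemma~\ref{lemma:vollstReduzibel}) one writes $X=W\oplus X^G$ and, since the strata of $X$ are the strata of $W$ times the factor $X^G$, one may replace $X$ by $W$ and assume $X^G=\{0\}$. After this reduction every $v\in\Mp\cap\Sigma$ has \emph{proper} compatible isotropy $G_v\lvertneqq G$, and the induction is run over the pair consisting of the dimension and the number of connected components of $G$: in your Case 1 the dimension drops; in your Case 2 one has $G^0\subseteq G_v\lvertneqq G$, so the dimension is unchanged but the number of components strictly drops, and the inductive hypothesis applies to the $G_v$-representation $V$ (whose own fixed space $V^{G_v}$ may be nonzero---that is handled inside the inductive call, not by hand). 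Compatibility of $G_v$ guarantees finitely many components, so this lexicographic induction terminates. If you insert the splitting $X=W\oplus X^G$ at the start and recast your induction on this pair rather than on $\dim G$ alone, your argument becomes the paper's proof.
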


\begin{proof}
Since the representation is completely reducible
(Lemma~\ref{lemma:vollstReduzibel}), we have an invariant
decomposition $X=W\oplus X^G$. Replacing $X$ by $W$, we may assume
$X^G=\{0\}$.

Recall that we have the notion of a gradient map and its zero
fiber $\Mp$ on the $G$-semistable space $X$. The strata in $X$ are
determined by the orbit types of closed orbits in $X$ and the
closed orbits intersect $\Mp$. But the strata as well as $\Mp$ are
cones in $X$, so up to the nullcone $I_G(X)$ every stratum
intersects $S^n\cap\Mp$ where $S^n$ is a sphere which is defined
with respect to an arbitrary inner product. Therefore it suffices
to show that $\Mp\cap S^n$ intersects only finitely many strata.
Let $(G_v,S,V)$ be a slice model at $v\in\Mp\cap S^n$. Since the
compact set $\Mp\cap S^n$ is covered by finitely many slice
neighborhoods, it suffices to show that the slice neighborhood
$GS$ consists of only finitely many strata. But the $G$-isotropy
strata in $GS$ are determined by the $G_v$-isotropy strata in $S$
which in turn are determined by the $G_v$-isotropy strata in the
representation space $V$. Moreover $G_v$ is a proper compatible
subgroup of $G$ since $X^G=\{0\}$. Therefore the proposition
follows by induction over the dimension and the number of
connected components of $G$.
\end{proof}

\begin{remark}
If $G$ acts properly on $V$, then there exists an open and dense
stratum in $V$. The proof is similar to that of
Proposition~\ref{proposition:RepresentationFinite}. Since every
$G$-orbit is closed, the nullcone is trivial and the sphere $S^n$
can be covered by finitely many slice neighborhoods where each
slice neighborhood contains a dense stratum by induction. If the
dimension of $V$ is greater than one, then the claim follows since
$S^n$ is connected. If the dimension of $V$ equals one, then there
is a dense stratum since the strata are cones.

More generally, if $X$ is a smooth $G$-manifold such that the
$G$-action is proper and if the quotient $X/G$ is connected, then
there exists a dense stratum in $X$.
\end{remark}

It remains to show the last statement of
Theorem~\ref{theorem:IsotropyStratification}. For this, let
$x\in\overline{I_{H_i}}\cap I_{H_j}$. Since the intersection is
$G$-saturated, we may assume that $G\cdot x$ is closed and that
$G_x=H_j$. Let $(H_j,S,V)$ be a slice model at $x$. The slice
neighborhood $GS$ and $\overline{I_{H_i}}$ are $G$-saturated, so
$GS$ intersects $X^{<H_i>}$. But this implies that $H_i$ is
conjugate to a subgroup of $H_j$. Thus the proof of
Theorem~\ref{theorem:IsotropyStratification} is completed.

\subsection{Strata and Slices}\label{subsection:strataSlices}

Let $x\in X_{cc}$ and let $(G_x,S,V)$ be a slice model at $x$. We
will give a criterion for which choice of $H_i$ the stratum
$I_{H_i}(S)$ in $S$ is non-empty in
Lemma~\ref{lemma:nonEmptyStrata}. For this, we first observe that
locally the defining set $\xh$ of $I_H(X)$ is determined by
$S^{<H>}=\{y\in S; (G_x)_y=H, G_x\cdot y\text{ closed}\}$.

\begin{lemma}\label{lemma:xhImSlice}
Let $(G_x,S,V)$ be a slice model at $x\in X_{cc}$ and let $H$ be a
compatible subgroup of $G_x$. Then there exists an open
$\nor_G(H)$-invariant neighborhood $W$ of $x$ in $X$ which
contains $S$ such that\begin{enumerate} \item $W\cap
X^{<H>}=\nor_G(H)\cdot S^{<H>}$ and \item $W\cap
X^{H}=\nor_G(H)\cdot S^{H}$.\end{enumerate}
\end{lemma}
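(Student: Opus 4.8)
The plan is to prove Lemma~\ref{lemma:xhImSlice} by analyzing the slice neighborhood $GS \cong G\times^{G_x}S$ and understanding how the global isotropy condition $G_y = H$ for a point $y$ in this neighborhood relates to the slice-local condition $(G_x)_y = H$. Let me sketch the two statements in parallel, since (2) concerning $W\cap X^H$ should be a minor variant of (1) concerning $W\cap X^{<H>}$.

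First I would identify the candidate neighborhood. Take $W := GS$, which is $G$-open and contains $S$; since $\nor_G(H)$ is compatible by Lemma~\ref{lemma:normalisator} and contains $H \subset G_x$, I want $W$ to be $\nor_G(H)$-invariant — this holds automatically as $W$ is $G$-invariant. The key containment to establish is $W\cap X^{<H>} \subset \nor_G(H)\cdot S^{<H>}$; the reverse inclusion $\nor_G(H)\cdot S^{<H>} \subset W\cap X^{<H>}$ is the easier direction, following because $\nor_G(H)$ normalizes $H$ (so it preserves the isotropy type $H$) and because $\nor_G(H)\cdot S \subset GS = W$. For the hard inclusion, take $y \in W\cap X^{<H>}$, so $G\cdot y$ is closed with $G_y = H$. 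Writing $y = g\cdot s$ with $s \in S$ and using the slice structure, the point $s$ lies on a closed $G_x$-orbit in $S$ (since $G\cdot y$ closed in $X$ iff $G_x\cdot s$ closed in $S$, as recalled in the stratification proof) with $(G_x)_s = G_s = g^{-1}Hg$. The crucial step is to show $g^{-1}Hg$ is \emph{compatible} in $G_x$: since $H$ is compatible and $g^{-1}Hg$ arises as an isotropy group on a closed orbit, Lemma~\ref{lemma:CompIsotropy} (after arranging $s \in \mathcal M_\liep$ for the slice) forces compatibility. Then Lemma~\ref{lemma:compatibleConjugated} applies: $g^{-1}Hg$ compatible implies $g^{-1}Hg = k H k^{-1}$ for some $k \in K$, and more precisely $g \in K\cdot \nor_G(H)$.

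The main obstacle, and the heart of the argument, is upgrading ``$g^{-1}Hg$ is conjugate to $H$'' to the precise statement $y \in \nor_G(H)\cdot S^{<H>}$. From $g \in K\cdot\nor_G(H)$ I would write $g = k\,n$ with $k \in K$, $n \in \nor_G(H)$, but I must be careful: $K$ here is $K_{G}$, not a subgroup of $G_x$, so $k\cdot s$ need not lie in $S$. The fix is to exploit that $n^{-1}g^{-1}Hgn = H$, i.e.\ $gn^{-1}$ conjugates $H$ to itself, so replacing $s$ by $s' := n^{-1}\cdot$(something) and absorbing the $K$-part into the slice: concretely, since $(G_x)_s = g^{-1}Hg$ is $K_{G_x}$-conjugate to $H$ within $G_x$ (by the final clause of Lemma~\ref{lemma:compatibleConjugated} applied inside $G_x$), there is $k_0 \in K_{G_x} \subset G_x$ with $(G_x)_{k_0 s} = H$, hence $k_0 s \in S^{<H>}$, and $y = g s = (g k_0^{-1})(k_0 s)$ with $gk_0^{-1}$ normalizing $H$. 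Verifying that $gk_0^{-1} \in \nor_G(H)$ then yields $y \in \nor_G(H)\cdot S^{<H>}$, which is exactly (1).

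For statement (2), I would observe that $X^H \subset X^{<H>}$ after recalling that $X^H$ here should mean $\{y\in X_{cc};\, G_y = H\}$ is replaced by the condition defining $S^H$, so the identical bookkeeping applies verbatim once the orbit-type analysis of (1) is in place; the only change is tracking the stronger pointwise condition through the $\nor_G(H)$-translation, which is preserved because $\nor_G(H)$ normalizes $H$. Throughout, I expect the delicate point to be the interplay between the global group $K = K_G$ and the slice isotropy's compact part $K_{G_x}$ — keeping these straight, and invoking Lemma~\ref{lemma:CompIsotropy} to guarantee compatibility of the slice isotropy so that Lemma~\ref{lemma:compatibleConjugated} becomes available, is where the real work lies.
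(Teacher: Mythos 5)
Your proposal has a genuine gap, located exactly at the step you flag as crucial, and it is not repairable with your choice of neighborhood. You set $W:=GS$ and need $GS\cap \xh\subset\nor_G(H)\cdot S^{<H>}$. Writing $y=gs$ with $G_y=H$, the slice structure gives $G_s=(G_x)_s=g^{-1}Hg$, and membership $y\in\nor_G(H)\cdot S^{<H>}$ is equivalent (again by the slice structure) to $g\in\nor_G(H)\cdot G_x$. Your route to this is to apply the final clause of Lemma~\ref{lemma:compatibleConjugated} \emph{inside} $G_x$ to produce $k_0\in K_x$ with $k_0(g^{-1}Hg)k_0^{-1}=H$; but that clause takes as hypothesis that the two compatible subgroups are conjugate by an element of the group in which the lemma is applied, i.e.\ by an element of $G_x$, while all you have is conjugacy by $g\in G$. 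Compatible subgroups of $G_x$ that are conjugate in $G$ need not be conjugate in $G_x$, and consequently the equality $GS\cap\xh=\nor_G(H)\cdot S^{<H>}$ is false in general. Concretely: let $G=S_4$, viewed as a finite (hence compact and compatible, with $\liep=0$) subgroup of a unitary group, let $G_x=D_4=\langle r,s\rangle$ with $r=(1234)$, $s=(13)$, and let $S=V=\ce^2$ with $r$ acting by $\mathrm{diag}(1,-1)$ and $s$ by $\mathrm{diag}(-1,1)$; a generic point $s_0\in S$ then has isotropy exactly the center $H'=\{e,(13)(24)\}$ of $D_4$. Put $H:=\{e,(12)(34)\}\subset G_x$ and $g_0:=(23)$, so that $g_0Hg_0^{-1}=H'$. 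The point $y:=g_0^{-1}s_0\in GS$ lies on a closed orbit with $G_y=g_0^{-1}H'g_0=H$, so $y\in GS\cap\xh$; but $S^{<H>}=\emptyset$, because the only order-two isotropy group occurring in $S$ is the central subgroup $H'$, which is not conjugate to $H$ within $D_4$. So with $W=GS$ the left-hand side is non-empty while the right-hand side is empty.

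This failure is precisely why the lemma requires a smaller $W$, and that shrinking --- in the $G/G_x$-direction, not in the slice direction --- is the idea missing from your proposal. The paper's proof considers the $G$-equivariant projection $p\colon G\times^{G_x}S\to G/G_x$, $p([g,s])=gG_x$, notes that the $H$-fixed points of $G/G_x$ form isolated $\nor_G(H)$-orbits (a coset $gG_x$ is $H$-fixed iff $g^{-1}Hg\subset G_x$; in the example above there are two such orbits, $\nor_G(H)\cdot eG_x$ and the single coset $g_0^{-1}G_x$), picks an $\nor_G(H)$-invariant neighborhood $W'$ of $eG_x$ whose $H$-fixed points are exactly $\nor_G(H)\cdot eG_x$, and sets $W:=p^{-1}(W')$. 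Then for $y\in W\cap X^H$, equivariance forces $p(y)$ to be an $H$-fixed coset in $W'$, hence $y=n\cdot s$ with $n\in\nor_G(H)$, $s\in S$, and $s=n^{-1}y$ is $H$-fixed; this gives (2), and (1) follows by additionally tracking that $G_s=(G_x)_s$ and that closedness of orbits passes between $S$ and $X$. Note finally that your treatment of (2) is also off: $X^H$ is the $H$-fixed point set, so $\xh\subset X^H$ and not conversely; in the paper's argument the fixed-point statement (2) is the primary one and the orbit-type statement (1) is the refinement, not the other way around.
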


\begin{proof}
Let $p\colon G\times^{G_x}S \to G/G_x$ denote the projection
$p([g,s])=g\cdot G_x$. The set of $H$-fixed points in the
homogeneous space $G/G_x$ consists of isolated $\nor_G(H)$-orbits.
Therefore there exists an open $\nor_G(H)$-invariant neighborhood
$W'$ of $e\cdot G_x$ in $G/G_x$ such that the $H$-fixed points in
$W'$ are given by $\nor_G(H)\cdot e\cdot G_x$. Defining
$W:=p^{-1}(W')$, the claim follows from $G$-equivariance of $p$.
\end{proof}

In the rest of this paper we assume that \emph{every non-empty
$G_x$-isotropy stratum in the slice $S$ contains $x$ in its
closure}. This can always be achieved by replacing $S$ by an
appropriate open neighborhood of $x$. More precisely, we remove
the closures of the strata which do not contain $x$ in their
closure. By Proposition~\ref{proposition:RepresentationFinite}
this is only a finite number of strata. Then we choose a
$G_x$-open neighborhood of $x$ inside the so obtained
$G_x$-invariant open neighborhood. This is possible by
Corollary~\ref{corollary:SaturatedNeighborhood}.

\begin{lemma}\label{lemma:nonEmptyStrata}
Let $X$ be a locally $G$-semistable space, let $x\in X_{cc}$ and
let $(G_x,S,V)$ be a slice model at $x$. Then a $G_x$-isotropy
stratum $I_{H}(S)$ in $S$ is non-empty if and only if $x$ is
contained in the closure $\xha$ of $\xh$.
\end{lemma}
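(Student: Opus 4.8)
The plan is to treat the two implications separately: the direction asserting that $x\in\xha$ forces $I_H(S)$ to be non-empty follows quickly from Lemma~\ref{lemma:xhImSlice}, while the converse is the substantial part and rests on the zero fiber of the gradient map on the slice. Here $H$ is a compatible subgroup of $G_x$, $G_x=K_x\exp(\liep_x)$ is its Cartan decomposition, and I will write $\mathcal M_{\liep_x}$ for the zero fiber of the $G_x$-gradient map on the $G_x$-semistable space $S$. Throughout I will use that $I_H(S)$ is non-empty if and only if $S^{<H>}$ is, since $I_H(S)=\pi_S^{-1}(\pi_S(S^{<H>}))\supseteq S^{<H>}$.

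For the implication $(\Leftarrow)$, suppose $x\in\xha$. By Lemma~\ref{lemma:xhImSlice} there is an open $\nor_G(H)$-invariant neighborhood $W$ of $x$ containing $S$ with $W\cap\xh=\nor_G(H)\cdot S^{<H>}$. As $x$ lies in the closure of $\xh$ and $W$ is an open neighborhood of $x$, the set $W\cap\xh$ is non-empty, so $\nor_G(H)\cdot S^{<H>}\neq\emptyset$ and hence $S^{<H>}\neq\emptyset$. Thus $I_H(S)\neq\emptyset$.

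For $(\Rightarrow)$, assume $I_H(S)\neq\emptyset$, equivalently $S^{<H>}\neq\emptyset$. The standing assumption on the slice then gives $x\in\overline{I_H(S)}$, and I would now pass to the zero fiber. Under the slice identification $x$ corresponds to $0$, so $x$ is fixed by $G_x$ and lies in $\mathcal M_{\liep_x}$. The stratum $I_H(S)$ is $G_x$-locally closed by the Isotropy Stratification Theorem~\ref{theorem:IsotropyStratification}, so Corollary~\ref{corollary:MpSchnitt}, applied to the $G_x$-semistable space $S$, yields $\overline{I_H(S)}\cap\mathcal M_{\liep_x}=\overline{I_H(S)\cap\mathcal M_{\liep_x}}$ and hence $x\in\overline{I_H(S)\cap\mathcal M_{\liep_x}}$. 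Next I would identify this intersection: a point $y\in\mathcal M_{\liep_x}$ has closed $G_x$-orbit, so $y\in I_H(S)$ precisely when $G_y$ is conjugate to $H$ in $G_x$; since $G_y$ and $H$ are compatible, Lemma~\ref{lemma:compatibleConjugated} allows the conjugating element to be taken in $K_x$, whence $I_H(S)\cap\mathcal M_{\liep_x}=K_x\cdot A$ with $A:=\{y\in\mathcal M_{\liep_x};\,G_y=H\}$. Because $K_x$ is compact, $\overline{K_x\cdot A}=K_x\cdot\overline A$, and since $x$ is $K_x$-fixed we obtain $x\in\overline A$. Finally, every $y\in A$ satisfies $G_y=H$ with $G\cdot y$ closed, hence $A\subseteq S^{<H>}\subseteq\xh$, so $x\in\overline A\subseteq\xha$.

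The main obstacle is exactly this converse: the standing assumption only places $x$ in the closure of the full $G_x$-saturated stratum $I_H(S)$, whereas the statement demands $x$ in the closure of the sharp-isotropy locus $\xh$. The zero-fiber analysis is what bridges this gap — Corollary~\ref{corollary:MpSchnitt} moves the closure inside the intersection with $\mathcal M_{\liep_x}$, and the $K_x$-conjugacy of Lemma~\ref{lemma:compatibleConjugated} together with the $K_x$-fixedness of $x$ collapses the conjugacy-saturated set $K_x\cdot A$ back to $A$. I would be careful to record that the slice identification places $x$ at $0\in\mathcal M_{\liep_x}$ and that $S^{<H>}\subseteq\xh$, since these two facts carry the final chain of inclusions.
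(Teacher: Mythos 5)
Your proof is correct and follows essentially the same route as the paper: the easy direction via Lemma~\ref{lemma:xhImSlice}, and the converse by placing $x=0\in\mathcal M_{\liep_x}$, applying Corollary~\ref{corollary:MpSchnitt} to the locally closed stratum $I_H(S)$, and using Lemma~\ref{lemma:compatibleConjugated} to replace $G_x$-conjugacy by $K_x$-conjugacy before exploiting that $x$ is $K_x$-fixed. The only difference is cosmetic --- the paper runs a sequence $x_n\in I_H(S)\cap\mathcal M_{\liep_x}$ with correcting elements $k_n\in K_x$, while you package the same argument as the set identity $I_H(S)\cap\mathcal M_{\liep_x}=K_x\cdot A$ together with $\overline{K_x\cdot A}=K_x\cdot\overline A$.
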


\begin{proof}
First, if $x\in\xha$ then $S^{<H>}$ is non-empty by
Lemma~\ref{lemma:xhImSlice} which implies that $I_H(S)$ is
non-empty.

Recall that $S$ is a $G_x$-semistable space with gradient map
$\mu_{\liep_x}\colon S\to \liep_x$. Since $x$ is a $G_x$-fixed
point, the orbit $G_x\cdot x=x$ is closed and a fortiori $x$ is
contained in the zero-fiber $\mathcal M_{\liep_x}$ of
$\mu_{\liep_x}$. If $I_H(S)$ is a non-empty stratum, we have
$x\in\overline{I_H(S)}$. Then $x\in\overline{I_H(S)\cap\mathcal
M_{\liep_x}}$ by Corollary~\ref{corollary:MpSchnitt}. Thus there
exists a sequence $x_n\in I_H(S)\cap \mathcal M_{\liep_x}$ which
converges to $x$. The $G_x$-isotropy at $x_n$ is compatible and
conjugate to $H$. Therefore there exist $k_n\in K_x$ with
$k_nx_n\in S^{<H>}\subset\xh$ by
Lemma~\ref{lemma:compatibleConjugated}. But $k_nx_n$ converges to
the $K_x$-fixed point $x$ so we get $x\in\xha$.
\end{proof}

\subsection{The splitting number}

In this section we assume that $X$ contains a dense stratum $I_H$.
Let $x\in X_{cc}$ and let $(G_x,S,V)$ be a slice model at $x$.
Recall that we assume that every open $G_x$-isotropy stratum
contains $x$ in its closure. We define $n(x)$ to be the number of
open $G_x$-isotropy strata in $S$. For an arbitrary point $y\in X$
we define $n(y):=n(x)$ where $x$ is a point in the fiber
$\pi^{-1}(\pi(y))$ of the quotient $\pi\colon X\to\quot XG$ which
is contained in $X_{cc}$. We call $n(y)$ the \emph{splitting
number at $y$}.

The splitting number is well defined. In order to see that, let
$x,x'\in\pi^{-1}(\pi(y))\cap X_{cc}$. Then $x'=gx$ for a $g\in G$.
If $S'$ is a slice at $x'$ then $S:=g^{-1}S'$ is a slice at $x$
and the map $g^{-1}\colon S'\to S$, $s\mapsto g^{-1}s$ sets up a
one to one correspondence between open strata in $S'$ and open
strata in $S$. The following proposition implies that $n(x)$ does
not depend on the choice of the slice at $x$.

\begin{proposition}\label{proposition:openStrata}
Let $X$ be a locally $G$-semistable space which contains a dense
stratum $I_H(X)$. Let $(G_x,S,V)$ be a slice model at $x\in
X_{cc}$. Then $I_{H'}(S)$ is a non-empty open stratum in $S$ if
and only if $kH'k^{-1}=H$ and $kx\in \xha$ for a $k\in K$. The
union of the open strata is dense in $S$ and coincides with
$I_H(X)\cap S$.
\end{proposition}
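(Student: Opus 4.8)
The plan is to transfer the hypotheses on $I_H(X)$ to the slice $S$ through the associated bundle $G\times^{G_x}S\cong GS$, and then to read off the open strata by a density argument. Write $A:=\bigcup_{H_i}I_{H_i}(S)$ for the union of those $G_x$-isotropy strata of $S$ whose (compatible) isotropy $H_i$ is conjugate in $G$ to $H$. By the description of $I_H(X)\cap GS$ obtained in the proof of Theorem~\ref{theorem:IsotropyStratification} we have $I_H(X)\cap GS=G\times^{G_x}A$ under the homeomorphism $\Psi$ of the Slice Theorem~\ref{slicetheorem}, and intersecting with the fibre $S=\{[e,s]\}$ gives $I_H(X)\cap S=A$. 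First I would record that $I_H(X)$, being dense and locally closed, is automatically open in $X$ (a dense locally closed set is open). Since $GS$ is $G$-open, $I_H(X)\cap GS$ is then open and dense in $GS$. Using that the orbit map $q\colon G\times S\to G\times^{G_x}S$ is an open surjection (quotient maps for continuous group actions are open) together with $q^{-1}(G\times^{G_x}A)=G\times A$, both openness and density pass between $G\times^{G_x}A\subset G\times^{G_x}S$ and $A\subset S$. Hence $A$ is open and dense in $S$.

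Next I would identify $A$ with the union of the open strata. The stratification of $S$ has only finitely many strata (Proposition~\ref{proposition:RepresentationFinite}), which are pairwise disjoint (Theorem~\ref{theorem:IsotropyStratification}(\ref{thm:IsoStrat:3})). For two distinct strata $I_{H_i}(S),I_{H_j}(S)$ occurring in $A$, Theorem~\ref{theorem:IsotropyStratification}(\ref{thm:IsoStrat:5}) forces $\overline{I_{H_i}(S)}\cap I_{H_j}(S)=\emptyset$: otherwise some $G_x$-conjugate of $H_i$ would be a proper subgroup of $H_j$, which is impossible since $H_i$ and $H_j$ are conjugate compatible subgroups, hence of equal dimension and equal number of components, so neither can be a proper closed subgroup of the other. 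Given $y\in I_{H_i}(S)$, the set $A\setminus\bigcup_{j\neq i}\overline{I_{H_j}(S)}$ (a finite union) is then an open neighbourhood of $y$ contained in $I_{H_i}(S)$, because every point of $A$ lies in some $A$-stratum. Thus each stratum occurring in $A$ is open. Conversely, any non-empty open stratum $I_{H'}(S)$ meets the dense set $A$, hence meets some $A$-stratum, and by disjointness equals it; so its isotropy is conjugate in $G$ to $H$. Therefore the union of the open strata is exactly $A$, which is dense in $S$ and equals $I_H(X)\cap S$, establishing the last assertion.

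It remains to rewrite the condition that $I_{H'}(S)$ be a non-empty open stratum in the stated form. By the previous paragraph, openness of a non-empty stratum $I_{H'}(S)$ is equivalent to $H'$ being conjugate in $G$ to $H$. Since $H'$ and $H$ are compatible, Lemma~\ref{lemma:compatibleConjugated} upgrades $G$-conjugacy to the existence of $k\in K$ with $kH'k^{-1}=H$: if $gH'g^{-1}=H$ then $g\in K\cdot\nor_G(H')$, and absorbing the normalizer factor leaves $kH'k^{-1}=H$. Non-emptiness of $I_{H'}(S)$ is equivalent to $x\in\overline{X^{<H'>}}$ by Lemma~\ref{lemma:nonEmptyStrata}. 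Finally $\xh=k\cdot X^{<H'>}$, so $\xha=k\cdot\overline{X^{<H'>}}$ and $x\in\overline{X^{<H'>}}$ is equivalent to $kx\in\xha$ for the chosen $k$. Combining these equivalences yields that $I_{H'}(S)$ is a non-empty open stratum if and only if $kH'k^{-1}=H$ and $kx\in\xha$ for some $k\in K$.

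The main obstacle I expect is the transfer step of the first paragraph together with the extraction of openness of the individual strata: one must argue carefully that openness and density survive passage through the associated bundle, and that a dense open union of finitely many pairwise closure-incomparable strata forces each summand to be open. The rigidity of conjugate compatible subgroups (the absence of proper containments) is exactly what produces the incomparability, and hence makes the whole argument work.
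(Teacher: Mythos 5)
Your proof is correct and follows essentially the same route as the paper's: both identify $I_H(X)\cap S$ with the union of those strata whose isotropy is $G$-conjugate to $H$, use Theorem~\ref{theorem:IsotropyStratification}(3) together with finiteness (Proposition~\ref{proposition:RepresentationFinite}) to conclude that these strata are open, and invoke Lemma~\ref{lemma:nonEmptyStrata} and Lemma~\ref{lemma:compatibleConjugated} for the non-emptiness criterion and the upgrade from $G$-conjugacy to $K$-conjugacy. The only difference is that you make explicit two points the paper leaves implicit, namely the transfer of openness and density through the associated bundle $G\times^{G_x}S\cong GS$ and the rigidity argument that conjugate compatible subgroups admit no proper containments.
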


\begin{proof}
By Lemma~\ref{lemma:nonEmptyStrata} the stratum $I_{k^{-1}Hk}(S)$
is non-empty if and only if $x\in\overline{X^{<k^{-1}Hk>}}$, or
equivalently $kx\in\xha$.

Let $I_{H'}(S)$ be any stratum in $S$ which intersects $I_H(X)\cap
S$. Then $H'$ is conjugate to $H$ in $G$ and we get
$I_{H'}(S)\subset I_H(X)\cap S$. It follows from
Theorem~\ref{theorem:IsotropyStratification} (3) that $I_{H'}(S)$
is closed in $I_H(X)\cap S$. The $G_x$-isotropy stratification of
$V$ is finite by
Proposition~\ref{proposition:RepresentationFinite}, which implies
that every stratum in $I_H(X)\cap S$ is also open in $S$. In
particular the union of the open strata coincides with the open
and dense subset $I_H(X)\cap S$.

Finally, a stratum $I_{H'}(S)$ is open if and only if it is
contained in $I_H(X)\cap S$ which is the case if and only if $H'$
is conjugate to $H$ in $G$ and then also in $K$ by
Lemma~\ref{lemma:compatibleConjugated}.
\end{proof}

For $x\in\xh$, the group $kHk^{-1}$ is a subgroup of $G_x=H$ if and only if
$k\in\nor_K(H)$. This implies

\begin{corollary}\label{corollary:n=1aufIH}
Let $X$ be a locally $G$-semistable space which contains a dense
stratum $I_H(X)$. Then $n(x)=1$ for all $x\in I_H$.
\end{corollary}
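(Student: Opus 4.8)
The plan is to deduce the corollary directly from Proposition~\ref{proposition:openStrata}, which has already characterized the open strata in a slice. Fix $x\in I_H=I_H(X)$. The splitting number $n(x)$ is defined using a slice model at a point of $X_{cc}$ lying in the fiber $\pi^{-1}(\pi(x))$, and since $n$ is well-defined (independent of the choice within that fiber), I may as well choose the representative to lie in $\xh$ itself. Indeed, $x\in I_H$ means the unique closed orbit in $\pi^{-1}(\pi(x))$ has orbit type $G/H$, so after replacing $x$ by a point on that closed orbit I may assume $G_x=H$, i.e.\ $x\in\xh\subset X_{cc}$. Let $(G_x,S,V)=(H,S,V)$ be a slice model at this $x$.

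By Proposition~\ref{proposition:openStrata}, the non-empty open strata in $S$ are exactly those $I_{H'}(S)$ with $H'=k^{-1}Hk$ for some $k\in K$ satisfying $kx\in\xha$. Thus counting open strata in $S$ amounts to counting the distinct compatible subgroups $k^{-1}Hk$ of $G_x=H$ that arise this way. The key elementary observation, already recorded in the excerpt just before the corollary, is that for $x\in\xh$ the conjugate $kHk^{-1}$ is a subgroup of $G_x=H$ precisely when $k\in\nor_K(H)$. I would make this the central step: since $S^{<H'>}$ is a subset of the slice $S\subset T_xX$ on which only subgroups of $G_x=H$ occur as isotropy groups, any $H'=k^{-1}Hk$ labelling a non-empty stratum in $S$ must be a subgroup of $H$; but $k^{-1}Hk\subseteq H$ forces $k\in\nor_K(H)$, and hence $k^{-1}Hk=H$. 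Therefore $H$ is the only subgroup of $G_x$ arising as the isotropy of a closed orbit of type $G/H$, so $I_H(S)$ is the unique non-empty open stratum in $S$.

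Consequently $n(x)=1$. I would also remark that the containment $kx\in\xha$ is automatically satisfied here: taking $k=e$ gives $x\in\xh\subset\xha$, so at least one open stratum is present, confirming that the count is exactly one rather than zero.

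The main obstacle, and the point requiring care, is the step identifying the labels $H'$ of open strata in $S$ as \emph{subgroups} of $G_x=H$ rather than merely conjugates of $H$ in $G$. This is where the slice structure is essential: the isotropy groups of points in $S$ are isotropy groups for the $G_x$-action, hence subgroups of $G_x=H$, so a conjugate $k^{-1}Hk$ can label a stratum in $S$ only if it actually embeds into $H$. Combined with the dimension/component bookkeeping implicit in Proposition~\ref{proposition:openStrata}---that an open stratum of type $G/H$ must have isotropy $G$-conjugate, hence (by Lemma~\ref{lemma:compatibleConjugated}) $K$-conjugate, to $H$---the subgroup relation $k^{-1}Hk\subseteq H$ collapses to equality, and the corollary follows.
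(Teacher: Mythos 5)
Your proposal is correct and follows essentially the same route as the paper: the paper's (very terse) proof is precisely the observation that for $x\in\xh$ a conjugate $kHk^{-1}$, $k\in K$, can be a subgroup of $G_x=H$ only when $k\in\nor_K(H)$, combined with Proposition~\ref{proposition:openStrata}, which is exactly your central step. Your write-up just makes explicit the reduction to a base point in $\xh$ and the fact that stratum labels in the slice are subgroups of $G_x$, both of which the paper leaves implicit.
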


\begin{example}\label{example:Splitting1}
By the remark following
Proposition~\ref{proposition:RepresentationFinite}, the splitting
number is constantly one if $X$ is a smooth $G$-space such that
$G$ acts properly on $X$.

If $X$ is a complex space or a complex variety equipped with a
regular action of a complex reductive group $G$, then it follows
from Luna's Slice Theorem (\cite{Luna}) that a stratum is locally
closed with respect to the Zariski topology. Then, if $X$ is
irreducible, there exists a dense stratum in $X$. In particular, if $X$ is
normal, it is irreducible at each point which implies $n(x)=1$ for
all $x\in X$.
\end{example}

\section{The Restriction Theorem}

\subsection{The Restriction Theorem}

Let $X$ be a locally $G$-semistable space containing a dense
stratum $I_H(X)$. Then $X$ is a locally $\nor_G(H)$-semistable
space by Lemma~\ref{lemma:HGradientSubspace} which in turn implies
that the closed $\nor_G(H)$-invariant subset $\xha$ of $X$ is a
locally $\nor_G(H)$-semistable space. In particular the quotient
$\pi_N\colon\xha\to\quot{\xha}{\nor_G(H)}$ exists.

We now state and prove our main result.

\begin{Rtheorem}\label{theorem:RestrictionThm}
Let $X$ be a locally $G$-semistable space containing a dense
$G$-isotropy stratum $I_H$. Then the inclusion
$\xha\hookrightarrow X$ induces a continuous finite surjective map
\[\Phi\colon\quot{\xha}{\nor_G(H)}\to \quot{X}{G}.\] For $x\in X$,
the number of points in the fiber $\Phi^{-1}(\pi(x))$ is equal to
the splitting number $n(x)$. If $x\in\xha$ with $n(x)>1$, then
$\Phi$ is not open at $\pi_N(x)$.
\end{Rtheorem}

Here, by a finite map, we mean a proper map with finite fibers.

\begin{remark}
If $X$ is a smooth manifold which is covered by $G$-open sets
which are diffeomorphic to $G$-locally closed submanifolds of
holomorphic representation spaces, then $\xha$ is a closed
submanifold of $X$. For the proof, we refer the reader to
Section~\ref{section:xhaGlatt}.
\end{remark}

Since a finite map is closed and a closed bijective map is open,
we get

\begin{corollary}\label{corollary:PhiHomeo}
Assume that $n(x)=1$ for all $x\in X$. Then $\Phi$ is a homeomorphism.
\end{corollary}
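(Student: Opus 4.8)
The plan is to read the corollary off directly from the Restriction Theorem, since under the hypothesis $n(x)=1$ all of its quantitative content collapses into bijectivity. First I would invoke the fiber-counting part of the theorem: for every $x\in X$ the fiber $\Phi^{-1}(\pi(x))$ consists of exactly $n(x)$ points. The assumption $n(x)=1$ for all $x\in X$ therefore forces each such fiber to be a single point, so $\Phi$ is injective. Combined with the surjectivity already asserted in the theorem, this shows that $\Phi\colon\quot{\xha}{\nor_G(H)}\to\quot XG$ is a continuous bijection.

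It then remains to promote this continuous bijection to a homeomorphism, for which I would use finiteness. Since $\Phi$ is finite in the sense fixed just before the corollary---proper with finite fibers---and a proper map into a sufficiently well behaved (here locally compact Hausdorff) space is closed, the map $\Phi$ is closed. For a bijection $f\colon A\to B$ one has the identity $f(A\setminus C)=B\setminus f(C)$, so being closed is equivalent to being open; hence $\Phi$ is also open. A continuous open bijection has a continuous inverse and is therefore a homeomorphism, which is exactly the assertion. (Equivalently, one may observe directly that a continuous closed bijection is a homeomorphism, since the preimage under $\Phi^{-1}$ of a closed set is $\Phi$ of that set, again closed.)

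I do not expect any genuine obstacle: the whole argument is a formal consequence of the Restriction Theorem together with the point-set facts that a finite map is closed and that a closed bijection is open. The only place requiring a word of care is the implication ``finite $\Rightarrow$ closed,'' which relies on the quotient spaces $\quot{\xha}{\nor_G(H)}$ and $\quot XG$ being well behaved; but this is precisely what is encoded in the meaning of ``finite map'' adopted here, so no further work beyond citing the theorem is needed.
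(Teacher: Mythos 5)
Your proposal is correct and matches the paper's own argument: the paper likewise reads bijectivity off from the fiber-counting statement ($\Phi^{-1}(\pi(x))$ has $n(x)=1$ points) together with surjectivity, and then concludes via the same point-set reasoning that a finite map is closed and a closed continuous bijection is a homeomorphism. Your extra remark on why properness yields closedness is a reasonable elaboration of what the paper leaves implicit in its definition of a finite map.
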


For a locally $G$-semistable space $X$, a stratum $I_H$ and the
closure $\overline{I_H}$ are again locally $G$-semistable spaces.
In particular, if $X$ does not contain a dense stratum, we may
apply the Restriction Theorem for $\overline{I_H}$. Moreover, for
each stratum $I_H$, the restriction $\Phi\colon
\quot{\xh}{\nor_G(H)}\to\quot {I_H}G$ is a homeomorphism by
Corollary~\ref{corollary:n=1aufIH}.

\begin{example}
$\Phi$ is a homeomorphism in the situations
considered in Example~\ref{example:Splitting1}.

If $X$ is a smooth $G$-manifold such that the quotient $X/G$ is
connected and if the $G$-action is proper, then a dense stratum
$I_H$ always exists. In particular, if $G$ is compact, the
assertion of the Restriction Theorem is equivalent to a result in
\cite{Schwarz80}.

For the action of a complex reductive group $G$ on an irreducible
normal complex affine variety $X$, a result similar to our
Restriction Theorem was established in \cite{LunaRich}. Here the
map $\quot{X^H}{\nor_G(H)}\to\quot XG$ is considered and it is
assumed that $\quot {X^H}{\nor_G(H)}$ is irreducible. However,
$\xha$ is a union of irreducible components of $X^H$ so the
quotients $\quot {X^H}{\nor_G(H)}$ and $\quot {\xha}{\nor_G(H)}$
coincide under the additional assumption that $\quot
{X^H}{\nor_G(H)}$ is irreducible. Due to normality of $X$, the map
$\Phi$ is an isomorphism of affine varieties.
\end{example}

\begin{example}
Let $n\geq 2$, $G:=\Sl_{2n}(\er)$ and consider the natural action
of $J:=\So(n,n)$ on $V:=\er^{2n}$. Defining $X:=G\times^JV$, a
$G$-orbit $G\cdot [g,v]$ in $X$ is closed if and only if the
$J$-orbit $J\cdot v$ is closed in $V$. Computations show that each
closed $J$-orbit in $V$ intersects one of the rays
$\ell_1:=\er^{\geq 0}\cdot (e_1,0)$ and $\ell_2:=\er^{\geq 0}\cdot
(0,e_1)$. In this notation we identify $\er^{2n}$ and
$\er^n\times\er^n$. Therefore the quotient $\quot XG$ is
homeomorphic to $\ell_1\cup\ell_2$ which is homeomorphic to $\er$.

Let $H_1:=J_{(e_1,0)}$ and $H_2:=J_{(0,e_1)}$. The
$J$-representation $V$ consists of three strata, namely the
nullcone $I_J(V)$ and the open strata $I_{H_1}(V)=\{(v_1,v_2)\in
V;\|v_1\|>\|v_2\|\}$ and $I_{H_2}(V)=\{(v_1,v_2)\in
V;\|v_1\|<\|v_2\|\}$. In particular, the splitting number at
$[e,0]$ equals $2$. The groups $H_1$ and $H_2$ are conjugate in
$K$. Explicitly we have
$H_1=k_0H_2k_0^{-1}$ for $k_0=\left(%
\begin{array}{cc}
  0 & -I_n \\
  I_n & 0 \\
\end{array}%
\right)$. Defining $H:=H_1$, we conclude that the stratum
$I_{H}(X)=I_{H_2}(X)$ is dense in $X$ and we may apply the
Restriction Theorem. We get
\[\xha=\{[g,v];\,g\in\nor_G(H),
v\in\ell_1\}\cup\{[gk_0,v];\,g\in\nor_G(H),v\in\ell_2\}.\] Then
the quotient $\quot{\xha}{\nor_G(H)}$ is homeomorphic to
$\{[e,v];\,v\in\ell_1\}\cup\{[k_0,v];\,v\in\ell_2\}$, which is
homeomorphic to the disjoint union of two rays. With respect to
these identifications, the map $\Phi$ is given by $\Phi([e,v])=v$
for $v\in\ell_1$ and $\Phi([k_0,v])=v$ for $v\in\ell_2$. We
observe that $\Phi$ glues the two rays at their boundary points.
\end{example}

\subsection{The proof of the Restriction Theorem}

First note that we may assume that $X$ is a $G$-semistable space.
Then the quotient $\quot XG$ is homeomorphic to $\Mp/K$ where
$\Mp$ is the zero fiber of the gradient map $\mu_\liep\colon
X\to\liep$. Moreover the quotient $\quot{\xha}{\nor_G(H)}$ is
homeomorphic to $\Mnp/\nor_K(H)$. Here $\np$ denotes the
intersection of $\liep$ with the Lie algebra of $\nor_G(H)$ and
$\Mnp$ is the zero fiber of the $\nor_G(H)$-gradient map
$\mu_{\np}\colon\xha\to\np$.

The following lemma shows in particular that $\Mnp$ is contained
in $\Mp$ which implies that the map $\Phi$ corresponds to a map
$\phi\colon\Mnp/\nor_K(H)\to\Mp/K$ such that the diagram
\[ \xymatrix{
\Mnp/\nor_K(H)\ar^\sim[d] \ar[r]^{\hspace{0.5cm}\phi}    & \Mp/K\ar^\sim[d] \\
\quot{\xha}{\nor_G(H)}\ar[r]^{\hspace{0.5cm}\Phi} & \quot{X}{G}}\]
commutes.

\begin{lemma}\label{lemma:Mp=Mnp}
We have $\Mnp=\xha\cap\Mp$.
\end{lemma}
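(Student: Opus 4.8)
The plan is to reduce the statement to a single Lie‑algebraic fact about the image of the gradient map. The starting observation is that the $\nor_G(H)$‑gradient map is just the orthogonal projection of the $G$‑gradient map: since $\nor_G(H)$ is compatible (Lemma~\ref{lemma:normalisator}) with $\np=\lien_\liep$ and acts on $\xha\subseteq V$ through the \emph{same} holomorphic representation and the \emph{same} $U$‑invariant inner product, the defining equation $\langle\mu_\np(v),\eta\rangle=\mu^{-\im\eta}(v)=\langle\mu_\liep(v),\eta\rangle$ for $\eta\in\np$ shows that $\mu_\np=\mathrm{pr}_\np\circ\mu_\liep$ on $\xha$, where $\mathrm{pr}_\np\colon\liep\to\np$ is the orthogonal projection. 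Consequently the inclusion $\xha\cap\Mp\subseteq\Mnp$ is immediate, and the entire content of the lemma is the reverse inclusion: if $x\in\xha$ and $\mu_\liep(x)\perp\np$, then $\mu_\liep(x)=0$.

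I would deduce this from the single claim that $\mu_\liep(\xha)\subseteq\np$; granting it, $\mu_\np=\mu_\liep$ on $\xha$, and then $\Mnp=\{x\in\xha:\mu_\liep(x)=0\}=\xha\cap\Mp$ at once. To prove the claim, first note that $\xh\subseteq X^H$ and $X^H$ is closed, hence $\xha\subseteq X^H$; thus every $x\in\xha$ satisfies $\eta\cdot x=0$ for all $\eta\in\lieh$. I will in fact show the stronger statement $\mu_\liep(x)\in\zet_\liep(\lieh)\subseteq\np$ for every $H$‑fixed $x$, i.e.\ that $\beta:=\mu_\liep(x)$ centralizes $\lieh=(\lieh\cap\liek)\oplus\lieq$, where $\lieq=\lieh\cap\liep$.

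The computation rests on the explicit form of the gradient map. For a holomorphic representation $\liep$ acts by self‑adjoint and $\liek$ by skew‑adjoint operators, and unwinding the definition gives $\langle\mu_\liep(v),\xi\rangle=\langle\xi\cdot v,v\rangle$ for $\xi\in\liep$ (a real number). Fixing $\eta\in\lieh$, I test $[\beta,\eta]$ against a direction $\xi$ with $[\eta,\xi]\in\liep$: since the inner product is the canonical one, for which $\ad\eta$ is skew‑adjoint when $\eta\in\liek$ and self‑adjoint when $\eta\in\liep$, one has $\langle[\beta,\eta],\xi\rangle=\pm\langle\beta,[\eta,\xi]\rangle=\pm\langle[\eta,\xi]\cdot x,x\rangle=\pm\langle\eta\cdot(\xi\cdot x),x\rangle$, the last step using $[\eta,\xi]\cdot x=\eta\xi\cdot x-\xi\eta\cdot x=\eta\xi\cdot x$ because $\eta\cdot x=0$. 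As $\eta$ is self‑ or skew‑adjoint, this equals $\pm\langle\xi\cdot x,\eta\cdot x\rangle=0$, again because $\eta\cdot x=0$. For $\eta\in\lieq$ one has $[\beta,\eta]\in\liek$ and tests against $\xi\in\liek$ (so $[\eta,\xi]\in\liep$), while for $\eta\in\lieh\cap\liek$ one has $[\beta,\eta]\in\liep$ and tests against $\xi\in\liep$; in both cases the pairing vanishes for all admissible $\xi$, forcing $[\beta,\eta]=0$ and hence $\beta\in\zet_\liep(\lieh)$.

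The step I expect to be the main obstacle is precisely the $\lieq$‑part, i.e.\ showing $[\mu_\liep(x),\lieq]=0$. The compact directions $\lieh\cap\liek\subseteq\liek$ are harmless, since $\mu_\liep$ is $K$‑equivariant and one may simply differentiate $\mu_\liep(kx)=\Ad(k)\mu_\liep(x)$ at $x\in X^{\lieh\cap\liek}$; but for $\lieq\subseteq\liep$ no such equivariance is available and one cannot differentiate in $\liep$‑directions. The resolution above sidesteps this by using self‑adjointness of the $\liep$‑action together with $\eta\cdot x=0$. Equivalently, one can argue at the level of the full moment map: because the representation is complex linear, $\eta\cdot x=0$ gives $(\im\eta)\cdot x=0$, so $x$ is fixed by $\exp(\im\lieq)\subseteq U$, whence by the honest $\Ad(U)$‑equivariance $\mu(x)$ centralizes $(\lieh\cap\liek)\oplus\im\lieq\subseteq\lieu$; projecting to $\liep$ and using that for $\eta\in\lieq$ the operator $\ad(\im\eta)$ interchanges the complementary summands $\liep$ and $\im\liek$ of $\im\lieu$ then yields $[\im\eta,\mu_\liep(x)]=0$, i.e.\ $[\eta,\mu_\liep(x)]=0$.
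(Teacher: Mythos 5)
Your overall route is genuinely different from the paper's: the paper never computes with the gradient map on $\xha$, but instead proves $\xh\cap\Mnp=\xh\cap\Mp$ group-theoretically (for $x\in\xh\cap\Mnp$ one has $gx\in\Mp$ for some $g\in G$; compatibility of $G_x=H$ and $G_{gx}$ forces $g\in K\cdot\nor_G(H)$ by Lemma~\ref{lemma:compatibleConjugated}, and uniqueness of the $\nor_K(H)$-orbit in $\nor_G(H)\cdot x\cap\Mnp$ then gives $x\in\Mp$), and afterwards passes to closures via Corollary~\ref{corollary:MpSchnitt}. Your reduction of the lemma to the claim $\mu_\liep(\xha)\subseteq\np$ is correct and attractive, but your proof of that claim has a genuine gap: the asserted inclusion $\zet_\liep(\lieh)\subseteq\np$ is false in general, because $H$ need not be connected, and centralizing the Lie algebra $\lieh$ does not imply normalizing the group $H$. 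Compatible subgroups $H=L\exp(\lieq)$ in this paper may have $L$ finite or disconnected (finite principal isotropy occurs already for proper actions). Concretely, for $G=\Sl_2(\er)$ and $H\cong\mathbb{Z}_4$ the noncentral subgroup generated by the rotation $R_{\pi/2}\in\So(2)$, one has $\lieh=\{0\}$, hence $\zet_\liep(\lieh)=\liep$, while $\nor_G(H)=\zet_G(R_{\pi/2})=\So(2)$, so $\np=\{0\}$. The failure is not cosmetic: your argument only ever uses that $x$ is \emph{infinitesimally} fixed, i.e. $\eta\cdot x=0$ for $\eta\in\lieh$, and for merely $\lieh$-fixed points the conclusion $\mu_\liep(x)\in\np$ is simply wrong (in the example every point is $\lieh$-fixed, but $\mu_\liep\not\equiv 0$). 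The component group of $H$ must enter the argument.

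The repair stays entirely within your framework. Replace the target $\zet_\liep(\lieh)$ by the set of $\Ad(H)$-fixed vectors $\liep^{\Ad(H)}:=\{\xi\in\liep;\ \Ad(h)\xi=\xi\text{ for all }h\in H\}$, and show $\beta=\mu_\liep(x)\in\liep^{\Ad(H)}$ for $x\in X^H$: for $l\in L=H\cap K$ use the \emph{group-level} $K$-equivariance $\beta=\mu_\liep(lx)=\Ad(l)\beta$ (you invoked only its derivative, which is exactly where the components of $L$ are lost); for the factor $\exp(\lieq)$ your bracket computation, which is correct, gives $[\beta,\lieq]=0$ and hence invariance under $\Ad(\exp\lieq)$. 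Finally, $\liep^{\Ad(H)}\subseteq\np$ does hold: if $\Ad(h)\xi=\xi$ for all $h\in H$, then $\exp(t\xi)$ centralizes, hence normalizes, $H$ for every $t$, so $\xi\in\mathrm{Lie}(\nor_G(H))\cap\liep=\np$. With this modification your proof is complete, and it even yields the slightly stronger statement $\mu_\liep(X^H)\subseteq\np$; it is a legitimate, more computational alternative to the paper's argument. (A secondary inaccuracy: in your last paragraph, $\liep$ and $\im\liek$ are not complementary summands of $\im\lieu$ unless $\lieg^\ce=\lieu^\ce$; to project the identity $[\im\eta,\mu(x)]=0$ to $\liep$ one should instead use skew-symmetry of $[\im\eta,\,\cdot\,]$ together with $[\im\eta,[\im\eta,\liep]]\subseteq\liep$.)
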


\begin{proof}
Since $\np$ is a subspace of $\liep$, the inclusion
$\xha\cap\Mp\subset\Mnp$ follows from the definition of the
gradient map.

We show $\xh\cap\Mnp=\xh\cap\Mp$. For this let $x\in\xh\cap\Mnp$.
Since $G\cdot x$ is closed, we have $gx\in\Mp$ for some $g\in G$.
The isotropy groups $G_x=H$ and $G_{gx}=gHg^{-1}$ are compatible,
which yields $g=kh\in K\cdot \nor_G(H)$ by
Lemma~\ref{lemma:compatibleConjugated}. The $K$-invariance of
$\Mp$ implies $hx\in\Mp\subset\Mnp$. But then $h\cdot x
\in\nor_K(H)\cdot x$ since $\nor_G(H)\cdot x$ intersects $\Mnp$ in
a unique $\nor_K(H)$-orbit. Thus $x\in\Mp$ follows from the
$K$-invariance of $\Mp$. With Corollary~\ref{corollary:MpSchnitt}
we conclude $\Mnp=\overline{\xh\cap\Mnp}\subset\xha\cap\Mp$.
\end{proof}

Surjectivity of $\phi$ and then also of $\Phi$ follows from

\begin{lemma}\label{lemma:SurjektivitaetDerAbbZwischenDenQuotienten}
We have $\Mp=K\cdot \Mnp$. In particular $\Mp\subset K\cdot\xha$.
\end{lemma}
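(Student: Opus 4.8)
The plan is to prove the equality $\Mp=K\cdot\Mnp$ by establishing the two inclusions separately. The containment $K\cdot\Mnp\subset\Mp$ is immediate: by Lemma~\ref{lemma:Mp=Mnp} we have $\Mnp=\xha\cap\Mp\subset\Mp$, and since $\Mp$ is the zero fibre of the $K$-equivariant gradient map $\mu_\liep$, it is $K$-invariant, whence $K\cdot\Mnp\subset K\cdot\Mp=\Mp$. All the content therefore lies in the reverse inclusion $\Mp\subset K\cdot\Mnp$, which amounts to moving an arbitrary point of $\Mp$ into $\xha$ by an element of $K$.

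To carry this out I would fix $x\in\Mp$. Since $\Mp\subset X_{cc}$, the point $x$ admits a slice model $(G_x,S,V)$ as in the Slice Theorem, and by the standing convention of Section~\ref{subsection:strataSlices} I may arrange that every non-empty $G_x$-isotropy stratum of $S$ contains $x$ in its closure. The decisive step is to observe that the slice $S$ carries at least one open stratum. Here I would invoke Proposition~\ref{proposition:openStrata}: since $X$ contains the dense stratum $I_H(X)$, the union of the open strata of $S$ is dense in $S$, and because $S\neq\emptyset$ this forces some open stratum $I_{H'}(S)$ to be non-empty. Applying the characterization of open strata in the same proposition to this $H'$, I obtain an element $k\in K$ with $kx\in\xha$ (the accompanying condition $kH'k^{-1}=H$ is not needed).

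It then remains to assemble the conclusion. Because $\Mp$ is $K$-invariant we have $kx\in\Mp$, and combined with $kx\in\xha$ this yields $kx\in\xha\cap\Mp=\Mnp$ by Lemma~\ref{lemma:Mp=Mnp}. Hence $x=k^{-1}(kx)\in k^{-1}\Mnp\subset K\cdot\Mnp$, which gives $\Mp\subset K\cdot\Mnp$ and therefore the desired equality. The final assertion $\Mp\subset K\cdot\xha$ follows at once from $\Mnp\subset\xha$.

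I expect the only genuine obstacle to be the existence of an open stratum in the slice $S$; everything else is routine bookkeeping with the $K$-invariance of $\Mp$ and the identity $\xha\cap\Mp=\Mnp$. This obstacle is disposed of directly by the density clause of Proposition~\ref{proposition:openStrata}, which guarantees that the open strata fill a dense—hence non-empty—subset of the non-empty slice, so that the element $k\in K$ carrying $x$ into $\xha$ is automatically produced.
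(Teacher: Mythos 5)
Your proof is correct, and it takes a genuinely different route from the paper's. For the hard inclusion $\Mp\subset K\cdot\Mnp$, the paper argues globally: it first shows $\Mp\cap I_H=K\cdot(\Mp\cap\xh)\subset K\cdot\Mnp$ using Lemmas~\ref{lemma:CompIsotropy} and~\ref{lemma:compatibleConjugated}, then proves that $\Mp\cap I_H$ is dense in $\Mp$ (the density of $I_H$ in $X$ is transferred to $\Mp$ via the openness of the restriction $\pi|\Mp$, Theorem~\ref{theorem:HSch}(5)), and finally concludes by a closure argument, $K\cdot\Mnp$ being closed because $K$ is compact and $\Mnp$ is closed. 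You argue pointwise instead: for each $x\in\Mp\subset X_{cc}$ you take a slice, use the density clause of Proposition~\ref{proposition:openStrata} to produce a non-empty open stratum of $S$, and read off from the characterization in that proposition an element $k\in K$ with $kx\in\xha$; then $kx\in\xha\cap\Mp=\Mnp$ by Lemma~\ref{lemma:Mp=Mnp} and the $K$-invariance of $\Mp$ finishes the argument. This is exactly the surjectivity mechanism that the paper deploys later in Proposition~\ref{proposition:fiberSplitting}, here upgraded to a proof of the lemma itself; it is non-circular, since Proposition~\ref{proposition:openStrata} precedes this lemma and does not depend on it. What your route buys: a constructive, pointwise proof that dispenses with the openness of $\pi|\Mp$, the closedness of $K\cdot\Mnp$, and any limiting argument, and that makes the kinship with Proposition~\ref{proposition:fiberSplitting} explicit. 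What the paper's route buys: it relies only on the basic quotient-theoretic properties of $\Mp$ rather than on the full strength of Proposition~\ref{proposition:openStrata}; note that in your proof the density of $I_H$ enters only through that proposition's density clause, so the two arguments ultimately draw on the same source but package it quite differently.
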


\begin{proof}
The inclusion $K\cdot \Mnp\subset \Mp$ follows from $K$-invariance
of $\Mp$ and Lemma~\ref{lemma:Mp=Mnp}.

For $x\in\Mp\cap I_H$, the isotropy group $G_x$ is compatible and
conjugate to $H$. Then it is conjugate to $H$ in $K$ by
Lemma~\ref{lemma:compatibleConjugated}. This shows $\Mp\cap
I_H=K\cdot (\Mp\cap \xh)$ and $\Mp\cap I_H\subset K\cdot\Mnp$
follows from Lemma~\ref{lemma:Mp=Mnp}.

We claim that $\Mp\cap I_H$ is dense in $\Mp$. Then the assertion
of the lemma follows since $K\cdot\Mnp$ is closed. If this is not
the case, then there exist an $x\in\Mp$ and an open neighborhood
$W$ of $x$ in $\Mp$ which does not intersect $I_H$. Recall that
the restriction $\pi\colon\Mp\to\quot XG$ of the topological
Hilbert quotient is open. Therefore $\pi^{-1}(\pi(W))$ is a $G$-open
neighborhood of $x$ in $X$ which does not intersect $I_H$. This is
a contradiction, since $I_H$ is dense in $X$. So $\Mp\cap I_H$ is
dense in $\Mp$.
\end{proof}

For $x\in\Mnp$ the number of points in the fiber
$\phi^{-1}(\pi(x))$ equals the number of $\nor_K(H)$-orbits in
$K\cdot x\cap\Mnp$. Here we describe the 1-1-correspondence
between these orbits and the open $G_x$-isotropy strata in a slice
at $x$ explicitly. As a consequence we see that the number of
points in the fiber $\phi^{-1}(\pi(x))$ is equal to the splitting
number $n(x)$.

\begin{proposition}\label{proposition:fiberSplitting}
Let $x\in\Mnp$ and let $(G_x,S,V)$ be a slice model at $x$. Then
\begin{align*}
\Psi\colon(K\cdot x\cap\Mnp)/\nor_K(H)&\to
\{\mbox{Non-empty open $G_x$-isotropy strata in }S\},\\
\nor_K(H)\cdot k\cdot x&\mapsto I_{k^{-1}Hk}(S)
\end{align*}
is well-defined and bijective.
\end{proposition}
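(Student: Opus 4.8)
The plan is to use Proposition~\ref{proposition:openStrata} as a dictionary between the non-empty open $G_x$-isotropy strata in $S$ and the combinatorics of conjugating $H$ by elements $k\in K$, and to use the quotient theory for the compatible subgroup $\nor_G(H)$ to absorb the ambiguity coming from $\nor_K(H)$. Throughout I would exploit that $\Mp$ is $K$-invariant and contains $x$, so $K\cdot x\subset\Mp$; combined with $\Mnp=\xha\cap\Mp$ (Lemma~\ref{lemma:Mp=Mnp}) this means that a point $kx\in K\cdot x$ lies in $\Mnp$ exactly when $kx\in\xha$.

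First I would check that $\Psi$ is well-defined. For $k\cdot x\in K\cdot x\cap\Mnp$ we have $kx\in\xha$, and choosing a sequence in $\xh$ converging to $kx$ shows at once that $H\subseteq G_{kx}=kG_xk^{-1}$; hence $k^{-1}Hk$ is a subgroup of $G_x$, compatible by Lemma~\ref{lemma:compatibleConjugated}, so that $I_{k^{-1}Hk}(S)$ is a genuine $G_x$-isotropy stratum. That it is non-empty and open follows from Proposition~\ref{proposition:openStrata}, applied with the conjugating element $k$ itself (note $k(k^{-1}Hk)k^{-1}=H$ and $kx\in\xha$). To see independence of the representative, I would take $k'x=n\cdot kx$ with $n\in\nor_K(H)$, observe that $g_0:=(k')^{-1}nk$ fixes $x$, and compute $(k')^{-1}Hk'=g_0(k^{-1}Hk)g_0^{-1}$ using $nHn^{-1}=H$; thus the two subgroups are conjugate in $G_x$ and index the same stratum.

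Surjectivity is then almost immediate: for a non-empty open stratum $I_{H'}(S)$, Proposition~\ref{proposition:openStrata} furnishes $k\in K$ with $kH'k^{-1}=H$ and $kx\in\xha$; by the opening remark $kx\in\Mnp$, and $\Psi(\nor_K(H)\cdot kx)=I_{k^{-1}Hk}(S)=I_{H'}(S)$. For injectivity, suppose $I_{k^{-1}Hk}(S)=I_{(k')^{-1}Hk'}(S)$, so there is $g_0\in G_x$ with $(k')^{-1}Hk'=g_0(k^{-1}Hk)g_0^{-1}$. I would set $m:=k'g_0k^{-1}$, verify that $m$ normalizes $H$, and use $g_0x=x$ to get $m\cdot kx=k'x$. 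Thus $kx$ and $k'x$ both lie in the single orbit $\nor_G(H)\cdot kx$ and in $\Mnp$; since $kx\in\Mnp$ this orbit is closed in $\xha$, and the quotient theory for $\nor_G(H)$ (compatible by Lemma~\ref{lemma:normalisator}, so that Theorem~\ref{theorem:HSch} applies to its action on the $\nor_G(H)$-semistable space $\xha$ with zero fiber $\Mnp$) tells us that a closed $\nor_G(H)$-orbit meets $\Mnp$ in a single $\nor_K(H)$-orbit. Hence $kx$ and $k'x$ are $\nor_K(H)$-equivalent.

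The hard part is this injectivity step, namely the passage from ``$k^{-1}Hk$ and $(k')^{-1}Hk'$ are conjugate in $G_x$'' to ``$kx$ and $k'x$ are $\nor_K(H)$-equivalent''. The obstruction is that the conjugating element $g_0$ lives in $G_x$ and is in general not compact, so one cannot argue inside $K$ directly; the device of absorbing $g_0$ into $m=k'g_0k^{-1}\in\nor_G(H)$ and then invoking the uniqueness of the $\nor_K(H)$-orbit inside a closed $\nor_G(H)$-orbit — the same uniqueness already used in the proof of Lemma~\ref{lemma:Mp=Mnp} — is what resolves it. Verifying that $\nor_G(H)\cdot kx$ is indeed closed in $\xha$ and that $\Mnp$ genuinely plays the role of the zero fiber for the normalizer action is the main technical point to nail down.
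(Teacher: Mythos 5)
Your proof is correct, and two of its three steps are essentially the paper's: well-definedness and surjectivity both rest on Proposition~\ref{proposition:openStrata} together with the observation that $K\cdot x\subset\Mp$ and $\Mnp=\xha\cap\Mp$ (Lemma~\ref{lemma:Mp=Mnp}), so that $kx\in\Mnp$ if and only if $kx\in\xha$. Where you genuinely diverge is injectivity, which is indeed the crux. The paper stays local at $x$: from $I_{k_1^{-1}Hk_1}(S)=I_{k_2^{-1}Hk_2}(S)$ it deduces that these two compatible subgroups are conjugate in $G_x$, and then applies Lemma~\ref{lemma:compatibleConjugated} \emph{inside the real reductive group $G_x$} (legitimate since $G_x$ is compatible by Lemma~\ref{lemma:CompIsotropy}) to upgrade $G_x$-conjugacy to $K_x$-conjugacy, which immediately gives $\nor_K(H)\cdot k_1x=\nor_K(H)\cdot k_2x$. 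You instead absorb the conjugating element $g_0\in G_x$ into $m:=k'g_0k^{-1}\in\nor_G(H)$, so that $kx$ and $k'x$ lie on a single $\nor_G(H)$-orbit, and then invoke Theorem~\ref{theorem:HSch}~(3) and (4) for the $\nor_G(H)$-action on $\xha$ with zero fiber $\Mnp$ --- exactly the uniqueness of the $\nor_K(H)$-orbit that the paper itself uses in the proof of Lemma~\ref{lemma:Mp=Mnp}. Your flagged ``main technical point'' ($\nor_G(H)$ compatible via Lemma~\ref{lemma:normalisator}, $\xha$ an $\nor_G(H)$-semistable space with zero fiber $\Mnp$) is already settled in the reduction at the start of the proof of the Restriction Theorem, so nothing is missing. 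The trade-off: the paper's argument is shorter, purely group-theoretic, needs no structure on $\xha$ beyond the slice, and yields the slightly stronger conclusion of conjugacy in $K_x$; yours uses heavier quotient machinery but is uniform with the surrounding proof and replaces a second application of Lemma~\ref{lemma:compatibleConjugated} by the normalizer quotient theory. A minor bonus of your write-up is that you verify explicitly (via the limit argument giving $H\subseteq G_{kx}$) that $k^{-1}Hk$ is a subgroup of $G_x$, so that $I_{k^{-1}Hk}(S)$ is meaningful as a $G_x$-isotropy stratum; the paper leaves this implicit.
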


\begin{proof}
First, we show that $\Psi$ is well-defined. For
$kx\in\Mnp\subset\xha$, the stratum $I_{k^{-1}Hk}(S)$ is non-empty
and open by Proposition~\ref{proposition:openStrata}. Assume that
$\nor_K(H)\cdot k_1 x=\nor_K(H)\cdot k_2 x\subset K\cdot x\cap
\xha$ with $k_1,k_2\in K$. This is equivalent to
$k_1\in\nor_K(H)\cdot k_2\cdot K_x$ which in turn is equivalent to
the condition that $k_1^{-1}Hk_1$ and $k_2^{-1}Hk_2$ are conjugate
in $K_x$. But then $k_1^{-1}Hk_1$ and $k_2^{-1}Hk_2$ define the
same $G_x$-isotropy stratum in $S$.

For injectivity, assume $\Psi(\nor_K(H)\cdot k_1
x)=\Psi(\nor_K(H)\cdot k_2 x)$. Then
$I_{k_1^{-1}Hk_1}(S)=I_{k_2^{-1}Hk_2}(S)$ and the compatible
groups $k_1^{-1}Hk_1$ and $k_2^{-1}Hk_2$ are conjugate in $G_x$.
By Lemma~\ref{lemma:compatibleConjugated} they are conjugate in
$K_x$. Thus we get $\nor_K(H)\cdot k_1 x=\nor_K(H)\cdot k_2 x$.

It remains to show that $\Psi$ is surjective. By
Proposition~\ref{proposition:openStrata} a non-empty open stratum
is of the form $I_{k^{-1}Hk}(S)$ for some $k\in K$ with
$kx\in\xha$. Then $kx\in\Mnp$ by Lemma~\ref{lemma:Mp=Mnp}
and surjectivity is proved.
\end{proof}

The inclusion $\Mnp\hookrightarrow \Mp$ is continuous and
proper. Since $\nor_K(H)$ and $K$ are compact, this implies that
$\phi$ is continuous and proper. Hence, $\phi$ is finite.

To prove the last assertion of the Restriction Theorem let
$x,y\in\Mnp/\nor_K(H)$ with $x\neq y$ and $\phi(x)=\phi(y)$. Let
$W_x$ and $W_y$ be open neighborhoods of $x$ and $y$,
respectively, such that $W_x\cap W_y=\emptyset$. Assume that
$\phi$ is open at $x$. Since $\Mnp\cap\xh$ is dense in $\Mnp$ by
Corollary~\ref{corollary:MpSchnitt}, there exists a $z\in W_y\cap
(\xh\cap\Mnp)/\nor_K(H)$ satisfying $\phi(z)\in\phi(W_x)$. But
then $\phi(z)\in\phi(W_x)\cap\phi(W_y)$, which is impossible since
the restriction $\phi\colon(\xh\cap\Mnp)/\nor_K(H)\to (I_H\cap
\Mp)/K$ is injective by Corollary~\ref{corollary:PhiHomeo}.

\section{Smoothness of $\xha$}\label{section:xhaGlatt}

We assume that $X$ is a smooth locally $G$-semistable space and
that $I_H$ is a dense stratum in $X$. The purpose of this section
is to show that then $\xha$ is smooth.

\begin{theorem}
Assume $X$ is a smooth locally $G$-semistable space containing a
dense stratum $I_H(X)$. Then the closure $\xha$ of $\xh$ is open
and closed in the fixed point set $X^H$. In particular, $\xha$ is
a closed submanifold of $X$.
\end{theorem}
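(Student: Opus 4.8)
The plan is to prove the two topological assertions separately: that $\xha$ is closed in $X^H$, which is immediate, and that $\xha$ is open in $X^H$, which is the heart of the matter. Every point of $\xh$ is fixed by $H$, so $\xh\subset X^H$, and in a slice model $(G_x,S,V)$ at a point of $X_{cc}$ the fixed point set is locally $\nor_G(H)\cdot S^H$ with $S^H=V^H\cap S$ a linear slice, by Lemma~\ref{lemma:xhImSlice}; hence $X^H$ is a closed smooth submanifold of $X$, and since $\xha=\overline{\xh}$ is closed in $X$ it is closed in $X^H$. Once openness is also established, $\xha$ is a union of connected components of the submanifold $X^H$ and therefore a closed submanifold, which is the final assertion.

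For openness it suffices to produce, for each $x_0\in\xha$, an open neighbourhood $N$ of $x_0$ in $X^H$ with $\xh$ dense in $N$; then $N\subset\overline{\xh}=\xha$. The statement is local, so I may assume $X$ is a $G$-semistable space with gradient map $\mu_\liep$ and zero fibre $\Mp$. Replacing $x_0$ by the unique closed orbit in its fibre and using Corollary~\ref{corollary:MpSchnitt} to see that this orbit again meets $\xha$, I reduce to $x_0\in X_{cc}\cap\xha$, and after an inner conjugation I take $x_0\in\Mp$ with $G_{x_0}\supset H$ compatible. Choosing a slice model $(G_{x_0},S,V)$ at $x_0$, Lemma~\ref{lemma:xhImSlice} gives $GS\cap X^H=\nor_G(H)\cdot S^H$ and $GS\cap\xh=\nor_G(H)\cdot S^{<H>}$, where $S^{<H>}=\{s\in S:(G_{x_0})_s=H,\ G_{x_0}\cdot s\text{ closed}\}$ and $S^H=V^H\cap S$. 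By $\nor_G(H)$-invariance of both sets, density of $\xh$ near $x_0$ in $X^H$ is equivalent to density of $S^{<H>}$ in $S^H$, that is, of
\[V^{<H>}:=\{v\in V:(G_{x_0})_v=H,\ G_{x_0}\cdot v\text{ closed}\}\]
in the linear subspace $V^H$ near $0$.

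I would therefore prove, in the purely linear setting, that $V^{<H>}$ is open and dense in $V^H$. That $I_H(V)$ is the dense $G_{x_0}$-stratum follows from density of $I_H(X)$ together with Proposition~\ref{proposition:openStrata}, which identifies $I_H(X)\cap S$ with the union of the open strata of $S$. Openness of $V^{<H>}$ in $V^H$ is the same local identity used above for $X^H$: at any $v\in V^{<H>}$ we have $(G_{x_0})_v=H$, so applying Lemma~\ref{lemma:xhImSlice} to the $G_{x_0}$-action with subgroup $H$ yields $V^{<H>}=V^H$ in a neighbourhood of $v$. Nonemptiness of $V^{<H>}$ follows from $x_0\in\xha$ by Lemma~\ref{lemma:nonEmptyStrata}.

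The main obstacle is density of $V^{<H>}$ in $V^H$, and this is where the dense-stratum hypothesis is genuinely used. I would argue by induction on $\dim V$ together with $\dim G_{x_0}$ and the number of connected components of $G_{x_0}$, in the spirit of Proposition~\ref{proposition:RepresentationFinite}. Both $V^{<H>}$ and its complement in $V^H$ are invariant under positive scaling and $V^H$ is connected, so it suffices to show that $V^H\setminus V^{<H>}$ has empty interior. A point $v$ in this complement lies in a $G_{x_0}$-stratum $I_{H_i}(V)$ with $H_i$ not conjugate to $H$; since $v\in\overline{I_H(V)}$ and $V^{<H>}$ is already open, Theorem~\ref{theorem:IsotropyStratification}(3) forces a conjugate of $H$ to be strictly contained in $H_i$. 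Passing to a slice at the closed orbit through $v$ replaces the data by a strictly smaller one—either a slice of smaller dimension, or, in the fixed-point case, the proper subspace $V^{G_{x_0}}\subsetneq V^H$—that again carries its own dense stratum, so the inductive hypothesis shows that the trace of the bad locus on $V^H$ has positive codimension. As there are only finitely many such strata by Proposition~\ref{proposition:RepresentationFinite}, their union is nowhere dense in the connected space $V^H$, and the open cone $V^{<H>}$ is dense. Transferring this density back through the slice and $\nor_G(H)$ shows that every $x_0\in\xha$ has a neighbourhood in $X^H$ contained in $\overline{\xh}=\xha$; hence $\xha$ is open in $X^H$, and together with the closedness established above it is a union of connected components of the submanifold $X^H$, so a closed submanifold of $X$.
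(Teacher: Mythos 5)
Your reduction to the linear problem (density of $V^{<H>}$ in $V^H$ for a slice representation at a point of $\xha\cap X_{cc}$) and your openness argument for $V^{<H>}$ in $V^H$ are sound and parallel the paper. The gap is in the density argument, which is the heart of the theorem. Your induction rests on the assertion that the slice at the closed orbit through a bad point ``again carries its own dense stratum''. This is false in general: a slice of a space with a dense stratum only inherits a dense \emph{union} of open strata, whose groups are conjugate in $K$ but typically not conjugate inside the slice group --- this is exactly the splitting phenomenon $n(x)>1$ that the paper is built around (in the $\So(n,n)$ example the slice $V=\er^{2n}$ at $[e,0]$ has two open strata, neither dense). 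For the same reason your step ``$v\in\overline{I_H(V)}$'', used to invoke Theorem~\ref{theorem:IsotropyStratification}(3), is unjustified: only the union of the open strata is dense, not $I_H(V)$ itself. Even after reformulating the inductive statement to accommodate this, applying it to the slice data requires that the relevant conjugate of $H$ be the group of a \emph{non-empty} stratum of the slice, which by Lemma~\ref{lemma:nonEmptyStrata} means the slice base point lies in $\overline{X^{<kHk^{-1}>}}$; you never verify this, and it is not automatic.

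The decisive gap is that your induction cannot terminate in the fixed-point case. If the offending stratum is $I_{G_{x_0}}(V)=V^{G_{x_0}}+\mathcal N$ ($\mathcal N$ the nullcone) and $V^{G_{x_0}}=\{0\}$, the closed orbit in the fibre of a bad point is the origin, and the slice there is $V$ itself: there is no ``strictly smaller datum''. Your substitute, ``the proper subspace $V^{G_{x_0}}\subsetneq V^H$'', either ignores the nullcone summand of this stratum or yields no reduction at all. But this is precisely the crux: one must rule out that the nullcone, where all non-closed orbits live, contains an open piece of $V^H$, and density of the open strata in $V$ does not pass to their trace on the subspace $V^H$ by any purely topological argument. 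Some algebraic input is indispensable here. The paper takes it from \cite{St09}: a real-Zariski-open set $\mathcal U\subset V^H$ of points with closed orbit, together with the Zariski-open set $\mathcal O$ of points with $\dim G_{x_0}\cdot v\geq\dim G_{x_0}/H$; this allows the bad interior point $v_0$ to be chosen on a \emph{closed} orbit with isotropy $H'$ containing $H$ as an open subgroup, after which the paper compares the slice representations at $v_0$ and at a point of $V^{<H>}$ (equivalent as $H$-modules, since $H$ and $H'$ have the same Lie algebra) and shows $I_{H'}(V)$ is open using $\mathcal N'=\mathcal N$, contradicting Proposition~\ref{proposition:openStrata}. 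Alternatively, for the nullcone stratum one could use real algebraicity of the nullcone (Lemma~7.1 of \cite{Schuetzdeller}, already cited in the paper) together with $V^{<H>}\neq\emptyset$ to conclude that $\mathcal N\cap V^H$ is a proper real-algebraic, hence nowhere dense, subset of $V^H$. Your proposal invokes neither, so the core case of the induction is unproved.
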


\begin{proof}
First, we reduce the assertion of the theorem to the case, where
$X$ is a $G$-representation space. Assuming that $X$ is a
$G$-semistable space, the quotient $\quot{\xha}{\nor_G(H)}$ is
homeomorphic to $\Mnp/\nor_K(H)$. Moreover, we have
$\Mnp\subset\Mp$ by Lemma~\ref{lemma:Mp=Mnp}. Since $\xha$ and
$X^H$ are $\nor_G(H)$-invariant, it therefore suffices to show
that $\xha$ is open in $X^H$ at a point $x\in\xha\cap\Mp$. Let
$(G_x,S,V)$ be a slice model at $x$. Locally near $x$, we have
$\xh=\nor_G(H)\cdot S^{<H>}$ and $X^H=\nor_G(H)\cdot S^H$ by
Lemma~\ref{lemma:xhImSlice}. Then it suffices to show that
$\overline{V^{<H>}}=V^H$ since $S$ is an open neighborhood of $0$
in $V$.

By \cite{St09} there exists a subset $\mathcal U$ of $V^H$ which
is open with respect to the real algebraic Zariski topology such
that $G_x\cdot v$ is closed for $v\in \mathcal U$. Furthermore,
the set $\mathcal O:=\{v\in V^H; \dim G_x\cdot v\geq \dim G_x/H\}$
is Zariski open in $V^H$. The intersection $\mathcal U\cap
\mathcal O$ contains $V^{<H>}=I_H(V)\cap V^H$. If $V^{<H>}$ is not
dense in $V^H$, there exists a stratum $I_{H'}(V)$ such that the
intersection $I_{H'}(V)\cap\mathcal U\cap\mathcal O$ contains an
interior point $v_0$ in $V^H$. Conjugating $H'$ if necessary, we
may assume that $H'$ contains $H$ as an open subgroup. By
Proposition~\ref{proposition:openStrata}, it now suffices to show
that $I_{H'}(V)$ is open in $V$.

Let $(H',S_0,W_0)$ be a slice model at $v_0$ and let $(H,S_1,W_1)$
be a slice model at $v_1\in V^{<H>}$. Then $W_0$ and $W_1$ are
equivalent as $H$-representation spaces since $V=T_{v_0}(G\cdot
v_0)\oplus W_0=T_{v_1}(G\cdot v_1)\oplus W_1$ are $H$-invariant
decompositions of $V$ and since since $T_{v_0}(G\cdot v_0)$ and
$T_{v_1}(G\cdot v_1)$ are equivalent $H$-representations. Define
$W:=W_0\cong W_1$. We have $W=W^H+\mathcal N$ where $\mathcal N$
is the $H$-nullcone in $W$ since $I_H(V)$ is open. For openness of
$I_{H'}(V)$ we must show $W=W^{H'}+\mathcal N'$ where $\mathcal
N'$ is the $H'$-nullcone. But we have $\mathcal N=\mathcal N'$
since $H$ is open in $H'$. Since $v_0$ is an interior point of
$I_{H'}(V)\cap V^H$ in $V^H$, there exists a neighborhood $D$ of
$0$ in $W$ such that $D\cap W^H\subset W^{H'}+\mathcal N$. By
algebraicity we get $W^H\subset W^{H'}+\mathcal N$. But this
implies $W=W^H+\mathcal N=W^{H'}+\mathcal N'$ and the proof is
completed.
\end{proof}


\newcommand{\noopsort}[1]{} \newcommand{\printfirst}[2]{#1}
\newcommand{\singleletter}[1]{#1} \newcommand{\switchargs}[2]{#2#1}
\providecommand{\bysame}{\leavevmode\hbox to3em{\hrulefill}\thinspace}


\begin{thebibliography}{H}





\bibitem[BCM02]{BB} A.~Bia{\l}ynicki-Birula, J.~B.~Carrell and W.~M.~McGovern, \emph{Algebraic quotients. Torus actions and cohomology. The adjoint representation and the adjoint action}, Encyclopaedia of Mathematical Sciences 131, Invariant Theory and Algebraic Transformation Groups, II. Springer-Verlag, Berlin, 2002.







\bibitem[HS07a]{Schuetzdeller} P.~Heinzner and P.~Sch\"{u}tzdeller, \emph{Convexity properties of gradient maps}, arXiv:0710.1152v1 [math.CV], 2007.

\bibitem[HS07b]{HSch} P.~Heinzner and G.~Schwarz, \emph{Cartan decomposition of the moment map}, Math. Ann. \textbf{337} (2007), 197--232.






\bibitem[Ho65]{Hochschild} G.~Hochschild, \emph{The structure of Lie groups}, Holden-Day Inc., San Francisco, 1965.




\bibitem[LR79]{LunaRich} D.~Luna and R.~W.~Richardson, \emph{A generalization of
the Chevalley restriction theorem}, Duke Math. J. \textbf{46} (1979), no.~3,
487--496.

\bibitem[Lu73]{Luna} D.~Luna, \emph{Slices \'etales}, Bull.
    Soc. Math. France, M\'emoire \textbf{33} (1973), 81--105.


\bibitem[Mie07]{Mie07} C.~Miebach,
    \emph{Geometry of invariant subsets in complex semi-simple Lie groups},
Dissertation, Ruhr-Universit\"at Bochum, 2007.




\bibitem[Pa61]{Palais} R.~S.~Palais, \emph{On the existence of slices for actions of non-compact Lie
              groups}, Ann. of Math. (2) \textbf{73} (1961), 295--323.


\bibitem[Po98]{Po} D.~Poguntke, \emph{Normalizers and centralizers of reductive subgroups of almost
              connected Lie groups}, J. Lie Theory \textbf{8} (1998), 211--217.


\bibitem[RS90]{RS} R.~W.~Richardson and
    P.~J.~Slodowy, \emph{Minimum
    vectors
    for real reductive algebraic groups}, J. London Math. Soc.
    (2) \textbf{42} (1990), no.~3, 409--429.


\bibitem[Sj95]{Sjamaar} R.~Sjamaar, \emph{Holomorphic slices, symplectic reduction and multiplicities of
              representations}, Ann. of Math. (2) \textbf{141} (1995), no.~1, 87--129.

\bibitem[Sn82]{Snow} D.~M.~Snow, \emph{Reductive group actions on Stein spaces}, Math. Ann. \textbf{259} (1982), no.~1, 79--97.

\bibitem[St08]{S08} H.~St\"{o}tzel, \emph{Quotients of real reductive group actions related to orbit type strata}, Dissertation, Ruhr-Universit\"{a}t Bochum, 2008.

\bibitem[St09]{St09} H.~St\"{o}tzel, \emph{Closed orbits of real reductive representations}, in preparation.


\bibitem[Sch80]{Schwarz80} G.~W.~Schwarz, \emph{Lifting smooth homotopies of orbit spaces}, Inst. Hautes \'Etudes Sci. Publ. Math. \textbf{51} (1980), 37--135.


\end{thebibliography}
\end{document}